\documentclass{article}
\usepackage{authblk}
\usepackage[skip=10pt plus1pt, indent=20pt]{parskip}
\usepackage{indentfirst}
\usepackage[a4paper, margin=1.5in]{geometry}
\usepackage[shortlabels]{enumitem}
\usepackage{tikz}
\usepackage{import}
\usepackage{graphicx}
\usepackage{caption}
\usepackage{subcaption}
\usepackage[export]{adjustbox}
\usepackage{wrapfig}
\usepackage{float}
\usepackage{xcolor}
\usepackage{amsmath, amssymb, amsthm}
\usepackage[colorlinks=true, allcolors=blue]{hyperref}
\usepackage{cleveref}
\usepackage{aliascnt}
\usepackage{tabularx}
\usepackage{makecell}
\usepackage[mathlines, displaymath]{lineno}

\newcolumntype{Y}{>{\centering\arraybackslash}X}
\usepackage{placeins}

\newcommand{\ior}{\textnormal{int}}

\newcommand{\qandq}{\quad \text{and} \quad}

\newtheorem{theo}{Theorem}[section]

\newaliascnt{lemma}{theo}
\newtheorem{lemma}[lemma]{Lemma}
\aliascntresetthe{lemma}
\Crefname{lemma}{Lemma}{Lemmas}

\newaliascnt{claim}{theo}

\aliascntresetthe{claim}
\Crefname{claim}{Claim}{Claims}
\crefname{claim}{claim}{claims}

\newaliascnt{prop}{theo}
\newtheorem{prop}[prop]{Proposition}
\aliascntresetthe{prop}
\Crefname{prop}{Proposition}{Propositions}
\crefname{prop}{proposition}{propositions}

\newaliascnt{obs}{theo}

\aliascntresetthe{obs}
\Crefname{obs}{Observation}{Observations}
\crefname{obs}{observation}{observations}

\newaliascnt{remark}{theo}

\aliascntresetthe{remark}
\Crefname{remark}{Remark}{Remarks}
\crefname{remark}{remark}{remarks}

\newaliascnt{alg}{theo}
\newtheorem{alg}[alg]{Algorithm}
\aliascntresetthe{alg}
\Crefname{alg}{Algorithm}{Algorithms}
\crefname{alg}{algorithm}{algorithms}

\newaliascnt{notat}{theo}

\aliascntresetthe{notat}
\Crefname{notat}{Notation}{Notations}
\crefname{notat}{notation}{notations}

\theoremstyle{definition}

\newaliascnt{deff}{theo}
\newtheorem{deff}[deff]{Definition}
\aliascntresetthe{deff}
\Crefname{deff}{Definition}{Definitions}
\crefname{deff}{definition}{definitions}

\makeatletter

\newcommand{\mapsthrough}[1]{\overset{#1}{\rightsquigarrow}}

\title{An algorithm to detect and rigorously verify blenders}

\author[1]{Andy Hammerlindl\thanks{\texttt{andy.hammerlindl@monash.edu}}}
\author[1]{Natalia McAlister\thanks{\texttt{natalia.mcalister@monash.edu} (corresponding author)}}
\author[1]{Warwick Tucker\thanks{\texttt{warwick.tucker@monash.edu}}}

\affil[1]{School of Mathematics, Monash University, Victoria 3800 Australia}

\date{ }

\begin{document}

\maketitle

\begin{abstract}
    We present a characterisation of blenders based on mapping properties of certain sets of curves that can be rigorously verified by computer-assisted methods. We develop an algorithm to construct these sets of curves that requires only a rough approximation of the strong unstable direction in a prescribed region. Since our approach does not rely on precise data, such as the exact location of invariant manifolds or fixed points, it provides a systematic framework to verify blenders in explicit examples. Here, we apply this framework to rigorously verify that a family of three-dimensional H\'enon-like maps presents blenders.\\

   \noindent \textit{Keywords:} Partial hyperbolicity, computer-assisted proofs, blenders. 

   \noindent \textit{2020 MSC:} 37D30, 37M21, 65G20
\end{abstract}

\section{Introduction}

Uniformly hyperbolic systems are a well-understood class of chaotic dynamical systems, since their behavior is controlled by a uniform and robust structure. However, it is known that among chaotic systems, those that are uniformly hyperbolic are rare. This observation motivates the study of chaos beyond uniform hyperbolicity. While this behaviour is expected to be a common property, proving that a given system is robustly transitive and nonhyperbolic is challenging, and as a consequence, relatively few examples are known. With this motivation, we study \textit{blenders}, a geometric object introduced by Bonatti and D\'{i}az to construct examples of robustly transitive diffeomorphisms that are not uniformly hyperbolic \cite{MR1381990}.

In this paper, for an invariant transitive hyperbolic set with $\dim E^u > u$, we define a hyperbolic blender as a family of $u$-dimensional discs that robustly intersect the stable manifold of the invariant set (see \Cref{hypblenderdef}). There have been many formal definitions for blenders, common to all of these is the idea that the stable manifold of the invariant set behaves as if it had higher dimension than it actually does \cite{MR1381990, MR2105774, MR2427422, MR2931324, MR3559345}. Since the existence of blenders is an open condition, proving their existence is tractable using rigorous numerics. However, turning this conceptual advantage into a practical and rigorous verification remains challenging, as it typically requires detailed knowledge of large pieces of the stable manifold and other features of the system.

The purpose of this work is to introduce a general algorithm for rigorously verifying the existence of blenders. The method requires only a single disc that is somewhat aligned with the strong unstable direction. The algorithm iterates this disc to generate the family of discs required by the blender definition, without relying on additional information such as the exact location of the stable manifold, fixed points, or other properties. This makes it a practical tool for finding new examples of blenders, enabling the identification and rigorous verification of chaotic behaviour in systems that would be challenging to analyse using other methods. Moreover, we apply our algorithm to a well-known family of maps, proving the existence of blenders for a wider range of parameters than previously established.

To implement this algorithm, the first step is to characterise blenders in a way that does not rely on the stable manifold. In \Cref{sec:background}, we introduce a definition of a blender as a family $\Gamma$ of $u$-dimensional discs such that every disc in $\Gamma$ has a sub-disc whose image is also in $\Gamma$, and this property persists under perturbations of the map. Moreover, we prove that if these discs are contained in a certain region $\mathcal{D}$, then $\Gamma$ is a hyperbolic blender for the maximal invariant set of $\mathcal{D}$. The idea of the proof, as detailed in \Cref{sec:background}, is that this forward invariance property implies that every disc in $\Gamma$ has a point that stays in $\mathcal{D}$ after infinitely many iterations, therefore such point is in the stable manifold of $\Lambda$.

In \Cref{theo:main}, we express a family of discs as a finite union of certain subfamilies, that we call \textit{u-tubes} and give a condition for the existence of a blender. We say that a small u-tube $\mathcal{U}_i$ \textit{maps through} a big u-tube $\mathcal{V}_j$ if every disc in $\mathcal{U}_i$ has a sub-disc in the interior of $\mathcal{V}_j$.

\begin{theo} \label{theo:main}
    Let $f:M \to M$ be a diffeomorphism. Let $\Gamma$ be a non-empty family of discs that can be expressed as finite unions
    \[\Gamma = \bigcup_{i \in I}\mathcal{U}_{i} = \bigcup_{j \in J} \mathcal{V}_{j},\] 
    where $\mathcal{U}_{i}$ and $\mathcal{V}_{j}$ are small and big u-tubes respectively. Suppose that for all $i \in I$ there exists $j \in J$ such that $\mathcal{U}_{i}$ maps through $\mathcal{V}_{j}$. Then, $\Gamma$ is a blender.
\end{theo}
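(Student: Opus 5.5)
The plan is to check directly the two ingredients of the blender definition from \Cref{sec:background}. The first is the forward-invariance property: every disc in $\Gamma$ has a sub-disc whose image lies in $\Gamma$. The second is that this property persists for all $C^1$-perturbations $g$ of $f$, with some family $\Gamma_g$ in place of $\Gamma$. The finite decomposition into small and big u-tubes is what makes both ingredients follow from the single hypothesis that each $\mathcal{U}_i$ maps through some $\mathcal{V}_j$.

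\textbf{Step 1: invariance for $f$.} Let $D\in\Gamma$. Since $\Gamma=\bigcup_{i\in I}\mathcal{U}_i$, the disc $D$ lies in some small u-tube $\mathcal{U}_i$. Choose $j$ with $\mathcal{U}_i$ mapping through $\mathcal{V}_j$. By the definition of ``maps through'', $D$ contains a sub-disc $D'$ with $f(D')$ a disc of $\mathcal{V}_j$ lying in its interior. Since $\mathcal{V}_j\subset\Gamma$, we get $f(D')\in\Gamma$. This step is purely formal.

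\textbf{Step 2: robustness.} I would show that ``$\mathcal{U}_i$ maps through $\mathcal{V}_j$'' is an open condition in $f$, uniformly over the discs in $\mathcal{U}_i$. The intended mechanism is the following.
\begin{itemize}
\item The hypothesis puts the sub-disc image in the \emph{interior} of the big tube $\mathcal{V}_j$.
\item A small u-tube should be compact in the appropriate topology on discs, and a big u-tube should be defined by open, cone-type conditions: tangency to an unstable cone field, and a graph over a fixed $u$-dimensional domain with a margin.
\item Together these give a uniform positive margin $\varepsilon_{ij}$ for $f$.
\item If $g$ is $C^1$-close to $f$, the same construction applied to $g$ (the $g$-preimage of the part of $g(D)$ crossing $\mathcal{V}_j$) again yields a sub-disc whose image is a disc of $\mathcal{V}_j$.
\end{itemize}
Finiteness of $I$ and $J$ then lets us take the minimum of finitely many margins, so there is a single $C^1$-neighbourhood of $f$ that works for all pairs $(i,j)$ at once. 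Hence the same family $\Gamma$, still equal to $\bigcup_i\mathcal{U}_i=\bigcup_j\mathcal{V}_j$, satisfies the invariance property for every $g$ in that neighbourhood. Non-emptiness of $\Gamma$ is needed here so that the blender is non-trivial.

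\textbf{Step 3: connecting to the stable manifold.} If the blender definition in \Cref{sec:background} also requires the discs to meet $W^s$ of the maximal invariant set (the hyperbolic blender of \Cref{hypblenderdef}), I would invoke the earlier proposition. It applies because iterating Step 1 produces nested sub-discs $D\supset D_1\supset D_2\supset\cdots$ with $f^n(D_n)\in\Gamma\subset\mathcal{D}$. Compactness gives a point in $\bigcap_n D_n$ whose whole forward orbit stays in $\mathcal{D}$, and such a point lies in $W^s(\Lambda)$.

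I expect the main obstacle to be Step 2: extracting a uniform margin from ``interior of $\mathcal{V}_j$''. This requires the precise definitions of small and big u-tubes, namely compactness of small tubes together with openness and cone invariance of big tubes. The first thing I would try is to write both tubes as graphs over a common reference disc and use $C^1$ graph-transform estimates.
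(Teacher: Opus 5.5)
Your skeleton is right. The part of Step~2 that does the work is one neighbourhood per $i\in I$, then an intersection of finitely many of them, and that is the paper's entire proof. But what you call the main obstacle is not an obstacle, because in \Cref{deff:mapsthrough} robustness is built into the definition of ``maps through''. The statement $\mathcal{U}\mapsthrough{f}\mathcal{V}$ already supplies a $C^1$-open neighbourhood $\mathcal{F}$ of $f$, uniform over all $\gamma\in\mathcal{U}$, such that each $\gamma$ has a $\sigma\subset\gamma$ with $g(\sigma)\in\mathcal{V}$ for every $g\in\mathcal{F}$. So you take the $\mathcal{F}_i$ given by the hypothesis and set $\mathcal{F}=\bigcap_{i\in I}\mathcal{F}_i$. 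For $g\in\mathcal{F}$ and $\gamma\in\Gamma$, pick $i$ with $\gamma\in\mathcal{U}_i$; this gives $g(\sigma)\in\mathcal{V}_j\subset\Gamma$, which is exactly \Cref{blenderdef}. Step~1 is subsumed, since $f\in\mathcal{F}$.

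The margin-extraction argument should be dropped from this proof, for three reasons. First, at the level of \Cref{theo:main} the u-tubes are abstract families with no graph or cone structure, so it could not be carried out there. Second, its premise is shaky: a tube such as $\mathcal{U}(\alpha,\epsilon_y,\hat{\epsilon}_y,\epsilon_z,\hat{\epsilon}_z)$ is a closed $C^1$-neighbourhood of a curve and is not compact in the $C^1$ topology. Third, the paper does this passage from ``image in the interior'' to robustness elsewhere, in \Cref{lemma_verif}, for the concrete tubes. There the uniformity comes from strict inclusions holding at every point of the compact box $B$ and for every direction in the cone $\mathcal{C}^u$, which persist under $C^1$-small perturbations; it does not come from compactness of the family of discs.

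Step~3 lies outside the statement. \Cref{blenderdef} makes no reference to $W^s$; the link to hyperbolic blenders is \Cref{prop:blenderdefs}. That proposition needs the extra hypotheses $\Gamma\subset\mathcal{D}$ and hyperbolicity of $\Lambda$. It also needs a shadowing step to pass from ``forward orbit stays in $\mathcal{D}$'' to membership in $W^s_{\mathrm{loc}}(\Lambda_g)$.
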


\begin{figure}[h]
    \centering
    \begin{subfigure}[b]{0.45\textwidth}
        \includegraphics[width=\textwidth]{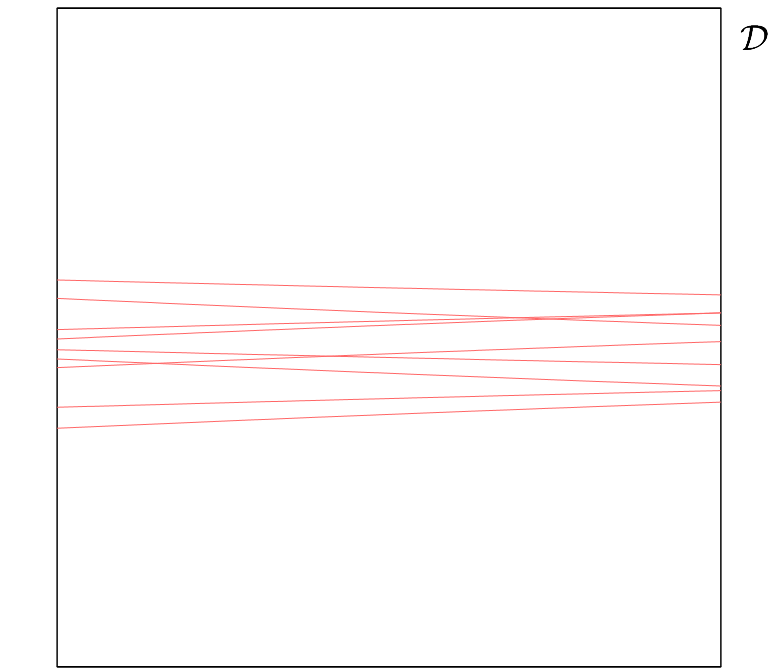}
    \end{subfigure}
    \hspace{0.75cm}
    \begin{subfigure}[b]{0.45\textwidth}
        \includegraphics[width=\textwidth]{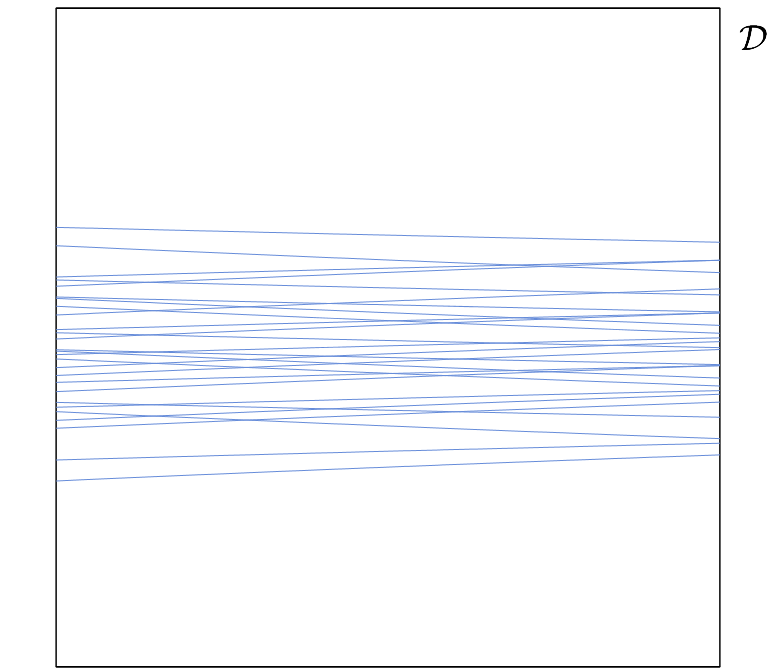}
    \end{subfigure}
    \begin{subfigure}[b]{0.49\textwidth}
        \vspace{0.5cm}
        \includegraphics[width=\textwidth]{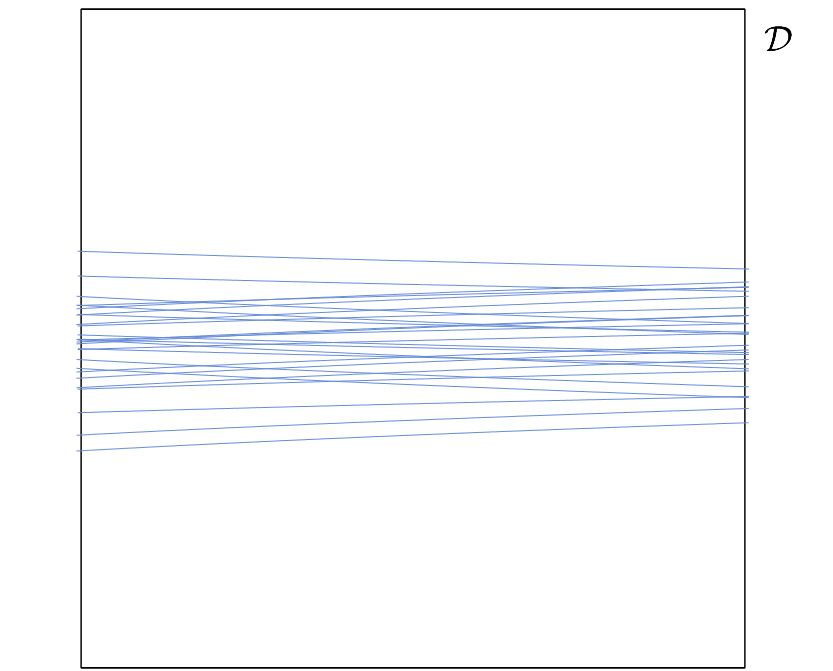}
    \end{subfigure}
    \hfill
    \begin{subfigure}[b]{0.49\textwidth}
        \vspace{0.5cm}
        \includegraphics[width=\textwidth]{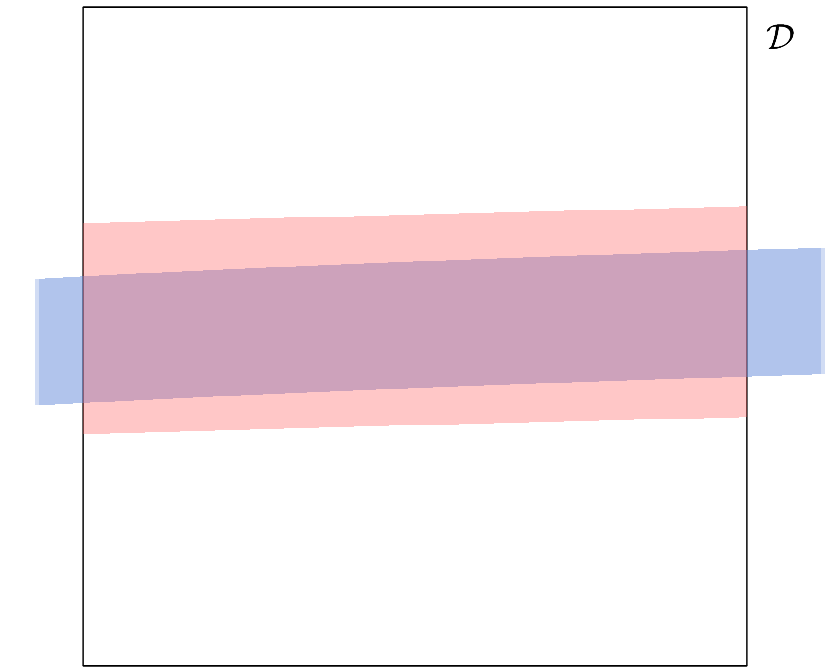}
    \end{subfigure}
    \caption{Projection onto the $xz$-plane of the objects appearing in the hypotheses of \Cref{theo:main}, for $f$ as in \eqref{eq:H} with $\xi = 1.2$. Here, the u-tubes are defined as neighbourhoods of certain curves.
    Top left: The curves defining the big u-tubes in $\Gamma$ (red).
    Top right: The curves defining the small u-tubes in $\Gamma$ (blue).
    Bottom left: Images of selected subcurves of the curves defining big u-tubes in $\Gamma$ (blue). 
    Bottom right: A subset of the image of a small u-tube $\mathcal{U}_i$ (blue), together with the big u-tube $\mathcal{V}_j$ that $\mathcal{U}_i$ maps through (red).}
    \label{fig:all_curves}
\end{figure}

In \Cref{fig:all_curves}, we illustrate a family of discs of dimension 1 in the hypothesis of \Cref{theo:main}. This theorem discretises the family of curves from the blender definition. Now, in order to verify that a map $f$ presents a blender it suffices to provide a finite set of big u-tubes, that can be expressed as a finite union of small u-tubes, and verify that any small u-tube in this family maps through one of the big u-tubes.

With this verification criterion established, we now outline the algorithm used to construct a set of u-tubes in the hypothesis of \Cref{theo:main}. We start with a big u-tube that is somewhat aligned with the strong unstable direction. Initially, the family consists of this single big u-tube, which is then split into finitely many small u-tubes. For each of these small u-tubes, we iterate it forward. If its image does not map through any of the big u-tubes already in the family, we construct a new big u-tube, which the small u-tube maps through, and add it to the family. We repeat this procedure for every big u-tube in the family. At the beginning, this process typically requires constructing many new u-tubes. However, after some iterations, we may observe that the images of the small u-tubes map through big u-tubes already in the family. If the algorithm terminates—meaning that no new u-tubes need to be added—then the verification criterion is satisfied, and the existence of a blender has been rigorously established. 

Although running this algorithm to termination is sufficient to verify the hypothesis of \Cref{theo:main}, once the family is constructed the verification can be performed much more efficiently. During the construction, we store, for each small u-tube, an identifier of the big u-tube that it maps through. After all this information is collected, we simply check that each small u-tube maps through its recorded big u-tube in the family. This approach significantly reduces runtime, as it avoids constructing new tubes or checking each small u-tube against all big u-tubes individually. Both of these algorithms are described in detail in \Cref{sec:constru_gen}.

To computationally verify that a small u-tube maps through a big u-tube we use validated numerics, a framework for computation that accounts for rounding and discretisation errors to produce rigorous results. Validated numerics rely on interval analysis, which is similar to numerical analysis but replacing real numbers with closed intervals. In this setting, real-valued functions can be extended to interval-valued ones where the output interval is guaranteed to contain the range of the real-valued function over the input interval. These methods were first introduced in \cite{MR231516} (see also \cite{MR2482682, MR2185786, MR2807595}), and they have since been used to prove numerous significant results in dynamical systems and related areas, see for instance \cite{MR1369639, MR1870856, MR3709329, MR4235311}. In our setting, by performing a change of coordinates we can represent u-tubes as cartesian products of intervals equipped with cone fields. Then, showing that a small u-tube maps through a big u-tube can be reduced to verifying a finite number of interval inclusions. This method is detailed in \Cref{sec:verif_spec}.

As an application of our method, we show the existence of blenders for a family of maps $f: \mathbb{R}^3 \to \mathbb{R}^3$ such that
\begin{equation} \label{eq:H}
    f(x, y, z) = (ax^2 + by + c, x, \xi z + x).
\end{equation}
This family was chosen since it is a conjugate to the map \begin{equation} \label{eq:F}
    F(x, y, z) = (y, \mu + y^2 + \beta x, \xi z +y),
\end{equation} with parameters related by $\mu = a c$, $b = \beta$ via the linear conjugacy $\phi(x, y, z) = (ay, ax, az)$. The family given by \eqref{eq:F} has been studied in \cite{MR3168258, MR3846435, MR4160092, MR4808810, CKO2025}. These maps present a full horseshoe dynamic on the first two coordinates for some choices of $a$, $b$ and $c$. On the third coordinate, they present expansion or contraction depending on the value of $\xi$. This family provides an explicit formula for maps with rich dynamical properties that were originally constructed in a more abstract setting. For this reason, it has been used to test different computer-assisted techniques. In particular, existence of blenders has been established for certain values of the parameter $\xi$ \cite{MR3168258, MR3846435, MR4808810}. In addition to this, there are numerical studies suggesting that blenders exist for a larger range of parameters \cite{MR3846435, MR4160092, MR4755600, CKO2025}. Our characterisation of blenders makes it possible to rigorously verify their existence for a wider set of parameter values, as stated in the following theorem.

\begin{theo} \label{theo:henon}
    Let $f$ be as in \eqref{eq:H} with $a = 4$, $b = 0.3$, $c = -2.375$, and $\xi \in (1, 1.712]$. Let $\mathcal{D} = [-1, 1] \times [-1, 1] \times [-\frac{1}{\xi -1}, \frac{1}{\xi -1}]$, and let $\Lambda = \bigcap_{n \in \mathbb{Z}} f^n(\mathcal{D})$. Then, there exists a hyperbolic blender for $f$ and $\Lambda$.
\end{theo}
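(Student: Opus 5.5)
The plan is to reduce \Cref{theo:henon} to \Cref{theo:main}, together with the result announced for \Cref{sec:background}: if a blender $\Gamma$ consists of discs contained in $\mathcal{D}$, then $\Gamma$ is a hyperbolic blender for the maximal invariant set $\Lambda$ of $\mathcal{D}$.

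\textbf{Step 1: the region and partial hyperbolicity.} First I check that $\mathcal{D}$ is a sensible isolating region.

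\begin{itemize}
\item For $a=4$, $b=0.3$, $c=-2.375$, the planar H\'enon part $(x,y)\mapsto(ax^2+by+c,x)$ has a full horseshoe on $[-1,1]^2$. This is standard, and verifiable by interval arithmetic on the boundary pieces.
\item The choice of $z$-interval makes $\mathcal{D}$ compatible with the skew direction. If $|x|\le 1$ and $|z|\le \frac{1}{\xi-1}$, then points with $\xi z + x$ outside the interval are expelled. Backward iterates in $z$ contract towards the interval, so the $z$-component of $\Lambda$ is automatically bounded by $\frac{1}{\xi-1}$.
\item On $\Lambda$ the derivative has the form
\[
Df=\begin{pmatrix}2ax & b & 0\\ 1&0&0\\ 1&0&\xi\end{pmatrix}.
\]
On the horseshoe, $|2ax|$ is bounded well above $\xi\le 1.712$. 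Hence there is a dominated splitting $E^s\oplus E^c\oplus E^{uu}$, with $E^c$ roughly along $\partial_z$ and expansion $\xi>1$.
\item So $\dim E^u=2>u=1$. I would verify this via cone fields on $\mathcal{D}\cap f^{-1}(\mathcal{D})$, using interval arithmetic.
\end{itemize}

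\textbf{Step 2: build the family $\Gamma$.} I would run the construction algorithm of \Cref{sec:constru_gen}.

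\begin{itemize}
\item Start from one big u-tube, namely a thin neighbourhood of a curve roughly tangent to the strong unstable cone, crossing $\mathcal{D}$ in $x$.
\item Split it into small u-tubes, iterate each one, and either find an existing big u-tube it maps through or create a new one.
\item Continue until closure, and record for each small tube the index of its target big tube.
\item Then run the certification pass of \Cref{sec:verif_spec}. In suitable coordinates each tube is a box with a cone field, so ``maps through'' reduces to finitely many interval inclusions: the image of a subcurve lies in the interior of the target box, and its tangents lie in the target cone.
\end{itemize}

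\textbf{Step 3: all discs lie in $\mathcal{D}$.} I would check this containment by interval inclusion of each tube in $\mathcal{D}$. \Cref{theo:main} then gives a blender, and the $\mathcal{D}$-containment result upgrades it to a hyperbolic blender for $\Lambda$.

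\textbf{Step 4: the whole parameter range.} The statement covers the continuum $\xi\in(1,1.712]$, so I would cover it by finitely many parameter intervals and treat $\xi$ as an interval in all computations. Two difficulties arise here.

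\begin{itemize}
\item Near $\xi=1$ the centre expansion degenerates and $\mathcal{D}$ becomes unbounded in $z$. I would handle this by rescaling $z\mapsto(\xi-1)z$, which makes $\mathcal{D}$ the fixed box $[-1,1]^3$ and turns the map into $(\xi-1)x+\xi z$ in the third coordinate. The u-tubes can then be built uniformly in $\xi$ on a small interval $(1,\xi_0]$. This is plausible because the blender there is close to the classical one, where the images of the two horseshoe legs overlap in the central direction with large overlap.
\item Near the upper endpoint $1.712$ the overlap condition becomes tight, since $\xi$ approaches the threshold where the two branches' images stop covering the $z$-range. There the tubes must be thin and numerous, and the parameter subintervals small.
\end{itemize}

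I expect the main obstacle to be exactly this: getting the construction to terminate uniformly in $\xi$ near $1.712$, and making the limit $\xi\to1$ rigorous. For the latter I would try a direct non-computational argument on $(1,\xi_0]$ after the rescaling.
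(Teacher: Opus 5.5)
Your plan follows the paper's route. You reduce to \Cref{theo:main} plus \Cref{prop:blenderdefs}, generate the tubes with \Cref{alg:main}, certify each ``maps through'' by the interval inclusions of a through box, and cover the parameter range by finitely many $\xi$-intervals on which $\xi$ is itself treated as an interval. There are two points to fix.

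\textbf{Transitivity.} You never establish that $\Lambda$ is transitive, which \Cref{prop:blenderdefs} and the definition of a hyperbolic blender both require. The paper simply cites standard methods for this. A short argument: a bounded orbit must satisfy $z_n=-\sum_{k\ge 0}\xi^{-(k+1)}x_{n+k}$. So $\Lambda$ is a graph over the planar horseshoe, and $f|_\Lambda$ is conjugate to the full two-shift.

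\textbf{The limit $\xi\to 1$.} The difficulty you expect there does not arise, so you need neither the rescaling $w=(\xi-1)z$ nor a separate non-computational argument on $(1,\xi_0]$. The paper treats $[1,1.1]$ like any other parameter interval.
\begin{itemize}
\item For each $\xi$-interval it uses the box $\mathcal{D}$ of the upper endpoint. This box lies inside $\mathcal{D}_\xi$ for every smaller $\xi$, because $1/(\xi-1)$ increases as $\xi$ decreases. So the unboundedness of $\mathcal{D}_\xi$ as $\xi\to1$ is harmless.
\item The family never has to fill $\mathcal{D}$. It stays in a bounded $z$-range because it starts from $\beta_1(t)=(t,y_0,0)$ and always chooses the interval $T$ whose image crosses $x=0$ nearer to $z=0$.
\item None of the through-box conditions needs the centre expansion to be bounded away from $1$. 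Decreasing $\xi$ only makes the $z$-estimate $\xi\epsilon_z+\cdots<k\epsilon_z$ and the cone condition easier; eight curves suffice on $[1,1.1]$.
\item Hyperbolicity of $\Lambda$ is needed only for each fixed $\xi>1$, so its degeneration at $\xi=1$ is irrelevant.
\end{itemize}
Your rescaling would also give a workable uniform computation on $[1,\xi_0]$, but it adds nothing over this.
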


\begin{figure}[h]
    \centering
    
    \begin{subfigure}[b]{0.55\textwidth} 
        \centering
        \includegraphics[width=\textwidth]{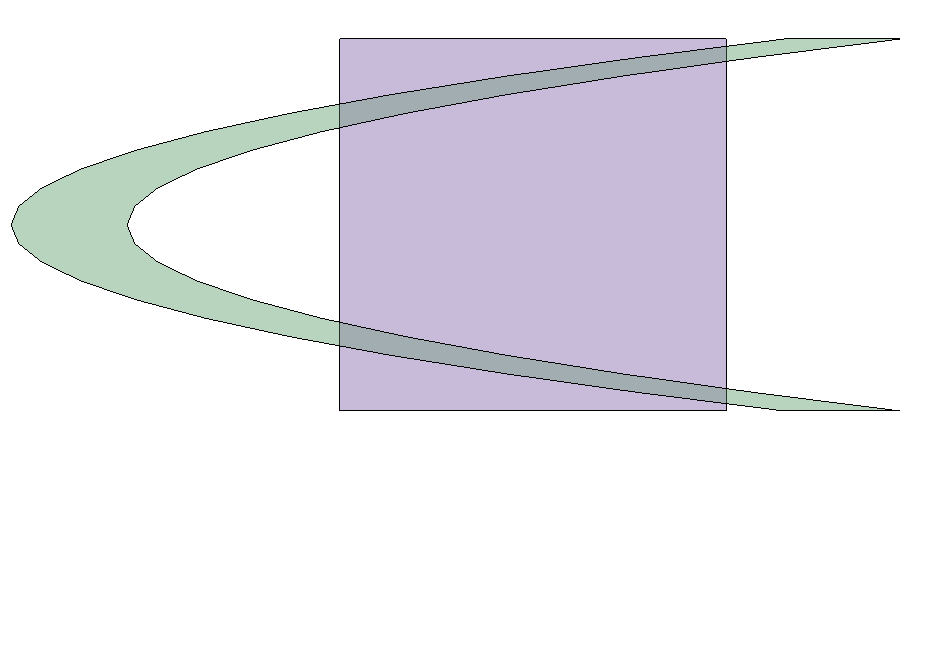}
    \end{subfigure}
    \hfill
    \begin{subfigure}[b]{0.43\textwidth}
        \centering
        \includegraphics[width=0.7\textwidth]{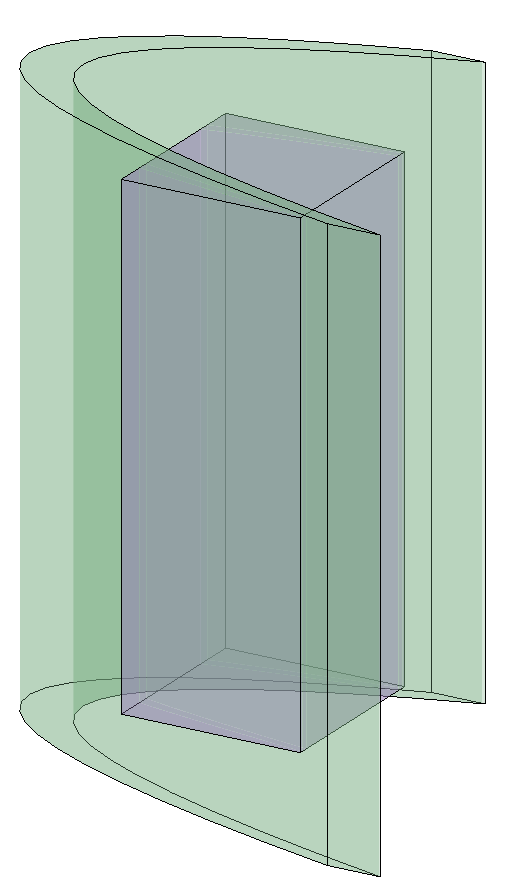}
    \end{subfigure}
    
    \caption{In purple the box $\mathcal{D}$. In green its image $f(\mathcal{D})$ projected to the $xy$-plane (left) and in three dimensions (right).}
\end{figure}

We note that since the existence of blenders is a $C^1$-robust property, \Cref{theo:henon} implies that the family of maps given by \eqref{eq:H} exhibits blenders for every $(a, b, c)$ in an open neighbourhood of $(4, 0.3, -2.375)$ and $\xi \in (1, 1.712 + \epsilon)$, for some $\epsilon > 0$. We work with the system given by \eqref{eq:H} rather than the one given by \eqref{eq:F} for computational convenience. While \eqref{eq:F} exhibits a full horseshoe dynamic in the first two coordinates over the domain $[-4, 4] \times [-4, 4]$, the conjugacy to \eqref{eq:H} with $a = 4$ enables us to obtain the same horseshoe dynamic over the domain $[-1, 1] \times [-1, 1]$. This allows us to represent curves passing through this region by polynomials defined on the interval $[-1,1]$ which simplifies certain computations. This choice is reflected in the first two coordinates of $\mathcal{D}$ in \Cref{theo:henon}. The third coordinate of $\mathcal{D}$ is chosen so that $\Lambda$ contains all bounded orbits of $f$. 

We prove \Cref{theo:henon} by applying the algorithm described above. Recall that to construct the family of curves, all that is required is a curve roughly aligned with the strong unstable direction. In this example, it is sufficient to take $\beta(t) = (t, y_0, 0)$, for some constant $y_0$, showing that precise alignment is not required.

The paper is organised as follows. In \Cref{sec:background}, we introduce definitions and prove \Cref{theo:main}. In \Cref{sec:constru_gen}, we describe two algorithms: one to construct the family of curves in the hypothesis of \Cref{theo:main}, and another to verify that a given family satisfies this hypothesis. In \Cref{sec:specdefs} we define u-tubes specific to the H\'enon-like family of maps given by \eqref{eq:H}. In \Cref{sec:verif_spec}, we present a method to verify that a given small u-tube maps through a given big u-tube, a step used throughout the construction and verification algorithms. \Cref{sec:verif_spec} and \Cref{sec:constru_spec} provide specifications to implement these algorithms for the H\'enon-like family of maps given by \eqref{eq:H}. In \Cref{sec:results}, we present the results obtained from running the program. Finally, \Cref{sec:discussion} discusses the results, some limitations of our method, and directions for future work. All computations are implemented using the CAPD library \cite{MR4283203} and the code is attached as supplementary material.

\textbf{Acknowledgements.} This research was partially supported by the Australian Research Council through grant DP220100492. The authors would like to thank Marisa Cantarino, Maciej Capi\'nski, Dana C'Julio, Bernd Krauskopf and Hinke Osinga for helpful conversations and valuable insights. The authors acknowledge the use of Claude AI (Anthropic) for proofreading and language polishing.

\section{Background, definitions and proof of \Cref{theo:main}} \label{sec:background}

We begin this section by introducing the definition of a blender used throughout this paper and comparing it with standard definitions in the literature. We then formally introduce the concepts required to state and prove \Cref{theo:main}, which provides a characterisation of blenders that can be verified computationally. In what follows, $M$ denotes a manifold without boundary, $d$ denotes its dimension $d = \dim M \geq 3$ and $u$ is an integer satisfying $1 \leq u < d$. A disc is the image of an element of $C^1(\mathbb{D}^u, M)$, where $\mathbb{D}^u$ is the closed unit disc in $\mathbb{R}^u$. 

\begin{deff} [Blender] \label{blenderdef}
    Let $f:M \to M$ be a diffeomorphism. A \textit{cu-blender of index $u$ for $f$} is a family $\Gamma$ of discs of dimension $u$ with the following property: there is a $C^1$-open neighbourhood $\mathcal{F}$ of $f$ such that for every $g \in \mathcal{F}$ and every $\gamma \in \Gamma$, there is a subset $\sigma \subset \gamma$ such that $g (\sigma) \in \Gamma$.
\end{deff}

We note that there are many definitions of blenders in the literature, most of them involve robust intersections with the stable manifold of a hyperbolic set \cite{MR2105774, MR2931324, MR3508160}. In \Cref{prop:blenderdefs}, we show that our blender captures this property for a suitable hyperbolic set. We begin by introducing the notions of hyperbolic set and stable and unstable manifolds.

\begin{deff}[Hyperbolic set] Given a map $f: M \to M$, we say that an invariant compact set $\Lambda \subset M$ is \textit{hyperbolic} if the tangent bundle over $\Lambda$ admits a continuous decomposition \[T_{\Lambda}M = E^u \oplus E^s\] invariant under $Df$, and such that there exists $\lambda < 1$ such that for all $p \in \Lambda$, \[||Df^{-1}(p) v_u|| < \lambda ||v_u||,\] \[||Df(p) v_s|| < \lambda ||v_s||,\] for all $v_u \in E_p^u$, $v_s \in E_p^s$, and a choice of a Riemannian metric on $M$.
\end{deff}

\begin{deff}[Stable and unstable manifolds] For a given point $p$ in a hyperbolic set $\Lambda$ we define the \textit{stable and unstable manifolds of $p$} as \[
W^s(p) = \{q \in M : \lim_{k\to \infty} d(f^k(p), f^k(q)) = 0 \},
\]
\[
W^u(p) = \{q \in M : \lim_{k\to \infty} d(f^{-k}(p), f^{-k}(q)) = 0 \}.
\]
We define the \textit{stable and unstable manifolds of $\Lambda$} as \[W^s(\Lambda) = \underset{p \in \Lambda}{\bigcup} W^s(p) \qandq W^u(\Lambda) = \underset{p \in \Lambda}{\bigcup} W^u(p).\]
For a neighbourhood $U$ of $p$, let $C_p(A)$ denote the connected component of a set $A$ containing $p$. We define the \textit{local stable and unstable manifolds of $p$} as
\[
W^s_{\textnormal{loc}}(p) = C_p(W^s(p) \cap U) \qandq W^u_{\textnormal{loc}}(p) = C_p(W^u(p) \cap U).
\]
We define the \textit{local stable and unstable manifolds of $\Lambda$} as \[W^s_{\textnormal{loc}}(\Lambda) = \underset{p \in \Lambda}{\bigcup} W^s_{\textnormal{loc}}(p) \qandq W^u_{\textnormal{loc}}(\Lambda) = \underset{p \in \Lambda}{\bigcup} W^u_{\textnormal{loc}}(p). \] 
\end{deff}

In our setting, the hyperbolic set $\Lambda$ is defined as the maximal invariant subset of a set $\mathcal{D}$. Throughout, we take $\mathcal{D}$ as the neighbourhood $U$ used in the definition of the local stable and unstable manifolds, although we omit this specification for brevity.

We now introduce the notion of \textit{hyperbolic blender}, following \cite{MR2931324}.

\begin{deff}[Hyperbolic blender] \label{hypblenderdef} Let $f: M \to M$ be a diffeomorphism and $\Lambda \subset M$ an invariant transitive hyperbolic set with $\dim E^u > u$. A \textit{hyperbolic cu-blender of index $u$ for $f$ and $\Lambda$} is a non-empty family $\Gamma$ of discs of dimension $u$ with the following property: there exists a $C^1$-open neighbourhood $\mathcal{F}$ of $f$ such that for every $g \in \mathcal{F}$ and every $\gamma \in \Gamma$, \[\gamma \cap W^s_{\textnormal{loc}}(\Lambda_g) \neq \emptyset,\]
where $\Lambda_g$ is the continuation of $\Lambda$ for $g$.
\end{deff}

We note that in several definitions found in the literature, the family of discs $\Gamma$ is explicitly required to be $C^1$-open. However, this requirement is redundant, as openness follows from the $C^1$-robustness over the neighbourhood $\mathcal{F}$. To see this, consider a $C^1$-open neighbourhood $\mathcal{G}$ of $f$ and a $C^1$-open neighbourhood $\mathcal{H}$ of the identity map on $M$ such that $h^{-1} \circ g \circ h \in \mathcal{F}$ for all $g \in \mathcal{G}$ and $h \in \mathcal{H}$. We define a $C^1$-neighbourhood of $\Gamma$ by $\mathcal{N} := \{ h (\gamma) : h \in \mathcal{H}, \gamma \in \Gamma \}$. For any disc $\Tilde{\gamma} = h (\gamma) \in \mathcal{N}$ and any $g \in \mathcal{G}$, we have $h^{-1} \circ g \circ h \in \mathcal{F}$, which implies that $\gamma$ intersects $W^s_{\text{loc}}(\Lambda_{h^{-1} \circ g \circ h})$. Since $h$ conjugates $h^{-1} \circ g \circ h$ to $g$, it follows that $W^s_{\text{loc}}(\Lambda_{h^{-1} \circ g \circ h}) = h^{-1}(W^s_{\text{loc}}(\Lambda_g))$. Thus, $\Tilde{\gamma}$ intersects $W^s_{\text{loc}}(\Lambda_g)$. Beyond simplifying the definition, omitting this requirement is particularly convenient for our computational framework. The following proposition relates blenders (\Cref{blenderdef}) to hyperbolic blenders (\Cref{hypblenderdef}).

\begin{prop} \label{prop:blenderdefs}
    Let $f:M \to M$ be a diffeomorphism and let $\Gamma$ be a cu-blender of index $u$ for $f$. Let $\mathcal{D} \subset M$ be a compact set such that $\Lambda = \bigcap_{n \in \mathbb{Z}} f^n(\mathcal{D})$ is a transitive hyperbolic set with $\dim E^u > u$. If every disc $\gamma \in \Gamma$ is contained in $\mathcal{D}$, then $\Gamma$ is a hyperbolic cu-blender of index $u$ for $f$ and $\Lambda$.
\end{prop}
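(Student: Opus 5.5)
The plan is to take, for the neighbourhood in \Cref{hypblenderdef}, the intersection of the neighbourhood $\mathcal{F}$ from \Cref{blenderdef} with a $C^1$-neighbourhood $\mathcal{F}'$ of $f$ on which the hyperbolic set $\Lambda$ has a continuation $\Lambda_g$. By structural stability of hyperbolic sets, we may choose $\mathcal{F}'$ so that, for every $g \in \mathcal{F}'$, $\Lambda_g$ is again the maximal invariant set $\bigcap_{n\in\mathbb{Z}} g^n(\mathcal{D})$. We may also choose it so that $\Lambda_g$ is hyperbolic and transitive with the same splitting dimensions. This identification of the continuation with the maximal invariant set of $\mathcal{D}$ is the step I expect to be the main obstacle. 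It needs $\Lambda$ to be locally maximal with $\mathcal{D}$ acting as an isolating neighbourhood. If $\Lambda$ touches $\partial\mathcal{D}$, I would first try to replace $\mathcal{D}$ by a slightly smaller isolating neighbourhood, or simply take $\Lambda_g := \bigcap_n g^n(\mathcal{D})$ as the relevant set, as the paper implicitly does. The rest of the argument only uses that $\Lambda_g$ is the maximal $g$-invariant subset of $\mathcal{D}$ and is hyperbolic.

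Fix $g$ in this neighbourhood and $\gamma_0 \in \Gamma$. Applying \Cref{blenderdef} repeatedly, I build a nested sequence of compact sets $\gamma_0 \supset \sigma_1 \supset \sigma_2 \supset \cdots$ with $g^n(\sigma_n) = \gamma_n \in \Gamma$. Given $\gamma_n \in \Gamma$, the definition yields $\sigma' \subset \gamma_n$ with $g(\sigma') \in \Gamma$. We then set $\gamma_{n+1} = g(\sigma')$ and $\sigma_{n+1} = \sigma_n \cap g^{-n}(\sigma')$, which equals $g^{-n}(\sigma')$. To keep the $\sigma_n$ compact, I replace each $\sigma'$ by its closure. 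Its image still lies in the closure of a disc, and every disc is closed since it is the continuous image of the compact set $\mathbb{D}^u$. In fact $g(\sigma')$ is itself a disc, hence compact, so $\sigma' = g^{-1}(g(\sigma'))$ is compact. Since all discs are non-empty, the Cantor intersection theorem gives a point $q \in \bigcap_n \sigma_n \subset \gamma_0$. This point satisfies $g^n(q) \in \gamma_n \subset \mathcal{D}$ for all $n \ge 0$, using the hypothesis that every disc of $\Gamma$ lies in $\mathcal{D}$.

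It remains to show that a point whose forward orbit stays in $\mathcal{D}$ lies in $W^s_{\mathrm{loc}}(\Lambda_g)$. The $\omega$-limit set of $q$ is contained in $\bigcap_{n} g^n(\mathcal{D}) = \Lambda_g$, since $\mathcal{D}$ is compact. I then invoke the standard shadowing / local product structure property of locally maximal hyperbolic sets: if the forward orbit of $q$ stays in a small isolating neighbourhood, then $q \in W^s(p)$ for some $p \in \Lambda_g$. Concretely, one constructs $p$ as the limit of points whose orbits shadow the pseudo-orbit obtained by following the orbit of $q$ forward and concatenating it with backward orbits in $\Lambda_g$. Because the orbit of $q$ and the orbit of $p$ both remain in $\mathcal{D}$ near each other, $q$ lies in the connected component of $W^s(p) \cap \mathcal{D}$ containing $p$, that is, in $W^s_{\mathrm{loc}}(p)$ with $U = \mathcal{D}$. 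If this connectedness is delicate, I would instead argue directly that $\Lambda_g^+ := \bigcap_{n\ge 0} g^{-n}(\mathcal{D})$ coincides with $W^s_{\mathrm{loc}}(\Lambda_g)$ for the isolating neighbourhood. Hence $\gamma_0 \cap W^s_{\mathrm{loc}}(\Lambda_g) \neq \emptyset$, and $\Gamma$ is non-empty by assumption, so $\Gamma$ is a hyperbolic cu-blender.
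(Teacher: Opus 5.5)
Your argument follows the paper's proof step by step: you shrink the neighbourhood so that $\Lambda_g$ exists and is identified with the maximal invariant set of $\mathcal{D}$. You then use the blender property to get nested non-empty compact sets $g^{-n}(\gamma_n)\subset\gamma$, take a point in their intersection, observe that its $\omega$-limit set lies in $\Lambda_g$, and finish with a shadowing argument. As in the paper, the last step is asserted rather than proved: it needs the point to lie in the component of $W^s(p)\cap\mathcal{D}$ through some $p\in\Lambda_g$, not merely in $W^s(\Lambda_g)$, and your remark that ``the orbits stay near each other'' only holds after discarding finitely many iterates. Your caveats about isolation and local maximality are well placed, but they do not change the route.
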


\begin{proof}
    Let $\mathcal{F}$ be the $C^1$-open neighbourhood of $f$ given by \Cref{blenderdef}. By shrinking $\mathcal{F}$ if necessary, we may assume that the continuation $\Lambda_g$ is well-defined and hyperbolic for all $g \in \mathcal{F}$. Fix $g \in \mathcal{F}$ and $\gamma \in \Gamma$. To show that $\gamma \cap W^s_{\text{loc}}(\Lambda_g) \neq \emptyset$, we begin by setting $\gamma_0 := \gamma$ and inductively construct a sequence of discs in $\Gamma$. By definition of $\Gamma$, for each $n \geq 0$, there exists a subset $\sigma_n \subset \gamma_n$ such that $\gamma_{n+1} := g ( \sigma_n) \in \Gamma$. Note that $g^{-1}(\gamma_{n+1}) \subset \gamma_n$. By induction, this yields the nested inclusion $g^{-n}(\gamma_n) \subset g^{-(n-1)}(\gamma_{n-1}) \subset \dots \subset \gamma$. This forms a nested sequence of non-empty compact subsets of $\gamma$. Consequently, there exists a point $p \in \bigcap_{n > 0} g^{-n}(\gamma_n) \subset \gamma$. By construction, $g^n(p) \in \gamma_n$ for all $n \geq 0$. We now show that $p \in W^s_{\text{loc}}(\Lambda_g)$.

    For all $n \in \mathbb{N}$, let $x_n := g^n(p)$. Since $\mathcal{D}$ is compact, any convergent subsequence of $\{ x_n \}$ converges to a point in $\mathcal{D}$. For any such subsequence $\{ x_{n_k} \}$, denote its limit by $y \in \mathcal{D}$. Note that $g^j(x_{n_k}) \in \mathcal{D}$ for all $j > -n_{k}$. Then, for a fixed $j \in \mathbb{Z}$, we have that $g^j(x_{n_k}) \in \mathcal{D}$ for all large $k$. Therefore, the limit $g^j(y)$ is also in the closed set $\mathcal{D}$, showing that $y \in \Lambda_g$. Since this holds for any converging subsequence $\{ x_{n_k} \}$, we have $\textnormal{dist} (x_n, \Lambda_g) \to 0$. Then, by a shadowing argument, it follows that $p \in W^s_{\text{loc}}(q)$, for some $q \in \Lambda_g$.
\end{proof}

In our computer implementation, the blender is represented by a finite union of subfamilies of a specific form, each determined by a finite amount of data. We call these subfamilies \textit{u-tubes}. In practice, u-tubes will be $C^1$-neighbourhoods around discs roughly aligned with the strong unstable directions. This discretisation allows us to verify the blender property in a finite number of steps by verifying that each u-tube maps through another u-tube in the family. However, since our maps are centre-expanding, they stretch u-tubes in the centre directions, making it difficult to verify this property. For this reason, we consider \textit{small u-tubes} and \textit{big u-tubes} such that a big u-tube can be expressed as a finite union of small u-tubes, and a small u-tube can map through a big u-tube. In practice, this splitting is done along the centre direction. The precise definitions are as follows:

\begin{deff} \label{deff:splitsinto}
    We say that a big u-tube $\mathcal{V}$ \textit{splits into} small u-tubes $\mathcal{U}_{1}, \dots, \mathcal{U}_{n}$ if their union satisfies $\mathcal{V} = \mathcal{U}_{1} \cup \dots \cup \mathcal{U}_{n}$. We refer to $\{ \mathcal{U}_i \}_{i=1}^n$ as a \textit{splitting} of $\mathcal{V}$.
\end{deff}

\begin{deff} \label{deff:mapsthrough}
    We say that a small u-tube $\mathcal{U}$ \textit{maps through} a big u-tube $\mathcal{V}$, denoted $\mathcal{U} \mapsthrough{f} \mathcal{V}$, if there exists a $C^1$-open neighbourhood $\mathcal{F}$ of $f$ such that for every $\gamma \in \mathcal{U}$, there exists $\sigma \subset \gamma$ such that $g ( \sigma) \in \mathcal{V}$ for every $g \in \mathcal{F}$. 
\end{deff}

With all the relevant notions in place, we can now proceed to the proof of \Cref{theo:main}.

\begin{proof}[Proof of \Cref{theo:main}] For each $i \in I$, there exists $j \in J$ such that $\mathcal{U}_i \mapsthrough{f} \mathcal{V}_j$. That is, there exists a $C^1$-open neighbourhood $\mathcal{F}_i$ of $f$ such that for each $\gamma \in \mathcal{U}_i$, there exists $\sigma \subset \gamma$ such that $g ( \sigma ) \in \mathcal{V}_j$ for every $g \in \mathcal{F}_i$. Since $I$ is finite the intersection $\mathcal{F} = \bigcap_{i \in I} \mathcal{F}_i$ is a $C^1$-open neighbourhood satisfying that for every $\gamma \in \Gamma$, there exists $\sigma \subset \gamma$ such that $g ( \sigma) \in \Gamma$ for every $g \in \mathcal{F}$.
 
\end{proof}

\section{General algorithms} \label{sec:constru_gen}

In this section we present an algorithm to build two finite sets of u-tubes $\mathcal{K} := \{\mathcal{U}_i \}_{i \in I}$ and $\mathcal{L} := \{\mathcal{V}_j \}_{j \in J}$, satisfying the hypotheses of \Cref{theo:main}. Here we present the most general version of the algorithm. In what follows we assume that we have sub-algorithms to: (i) split a big u-tube into appropriate small u-tubes, (ii) verify that a small u-tube maps through a big u-tube, and (iii) construct, given a small u-tube $\mathcal{U}$, a big u-tube $\mathcal{V}$ such that $\mathcal{U} \mapsthrough{f} \mathcal{V}$. The specific details of these sub-algorithms for the H\'enon-like family of maps given by \Cref{eq:H} are presented in Sections \ref{sec:specdefs}, \ref{sec:verif_spec} and \ref{sec:constru_spec}, respectively.

The overall structure of the algorithm that builds the sets of u-tubes $\mathcal{K}$ and $\mathcal{L}$ is as follows. We first consider an appropriate initial big u-tube somewhat aligned with the unstable directions, say $\mathcal{V}_1$. We initialise $\mathcal{L} = \{\mathcal{V}_1\}$ and add big u-tubes to this set until a certain condition is satisfied. 
As we construct $\mathcal{L}$, the set $\mathcal{K}$ will be determined as the union of splittings of all elements of $\mathcal{L}$. For every $\mathcal{U} \in \mathcal{K}$ we give $\mathcal{V} \in \mathcal{L}$ such that $\mathcal{U} \mapsthrough{f} \mathcal{V}$. We also consider a set of big u-tubes $\mathcal{P}$, which consists of the big u-tubes in $\mathcal{L}$ pending to be processed. We initialise $\mathcal{P} = \mathcal{L}$ and add and remove elements from $\mathcal{P}$. Throughout the algorithm we have the following invariant condition: \begin{itemize}
    \item [(A)] For any $\mathcal{U} \in \mathcal{K}$ that arises from splitting a big u-tube in $\mathcal{L} \setminus \mathcal{P}$, there is $\mathcal{V} \in \mathcal{L}$ satisfying $\mathcal{U} \mapsthrough{f} \mathcal{V}$. \label{cond_A}
\end{itemize} Then, if $\mathcal{P}$ becomes empty we have obtained the required sets $\mathcal{K}$ and $\mathcal{L}$. Using this notation, the step-by-step execution is outlined in \Cref{alg:main}.

\begin{alg} \label{alg:main}
\end{alg}
\begin{enumerate}
    \item Initialise $\mathcal{P} = \mathcal{L} = \{\mathcal{V}_1\}$.
    \item Take an arbitrary element $\hat{\mathcal{V}} \in \mathcal{P}$ and consider a splitting $\{ \mathcal{U}_i \}_{i=1}^n$. For each $i \in \{1, ..., n\}$,
        \begin{enumerate}
            \item Let $\mathcal{U} = \mathcal{U}_i$. Check if there exists $\mathcal{V} \in \mathcal{L}$ such that $\mathcal{U} \mapsthrough{f} \mathcal{V}$. If not,
            \begin{enumerate}
                \item Construct a big u-tube $\mathcal{V}$ satisfying $\mathcal{U} \mapsthrough{f} \mathcal{V}$.
                \item Add $\mathcal{V}$ to $\mathcal{L}$ and $\mathcal{P}$.
            \end{enumerate}
        \end{enumerate}
    \item Remove $\hat{\mathcal{V}}$ from $\mathcal{P}$.
    \item If $\mathcal{P}$ is not empty, go to step 2.
\end{enumerate}

Since $\mathcal{L} \setminus \mathcal{P}$ is the empty set at the beginning of the algorithm, it is clear that (A) holds at this point. Every time we apply step 2, every small u-tube $\mathcal{U} \in \mathcal{K}$ arising from a splitting of $\hat{\mathcal{V}}$ is assigned a big u-tube $\mathcal{V} \in \mathcal{L}$ such that $\mathcal{U} \mapsthrough{f} \mathcal{V}$, regardless of whether $\mathcal{V}$ was constructed at this step or was already in $\mathcal{L}$. Then, after step 2 is completed we may remove $\hat{\mathcal{V}}$ from $\mathcal{P}$ and still have (A), which demonstrates that it is indeed an invariant condition. Note that at step 2 we add between 0 and $n$ new u-tubes to $\mathcal{P}$, so there is no guarantee that it will become empty and the algorithm will end. But it turns out that for the particular family of maps we are considering and for the specifications given later on in \Cref{sec:results}, it does eventually terminate, as shown in \Cref{fig:noElements}.

\begin{figure}[h]
    \centering
    \includegraphics[width=0.95\linewidth]{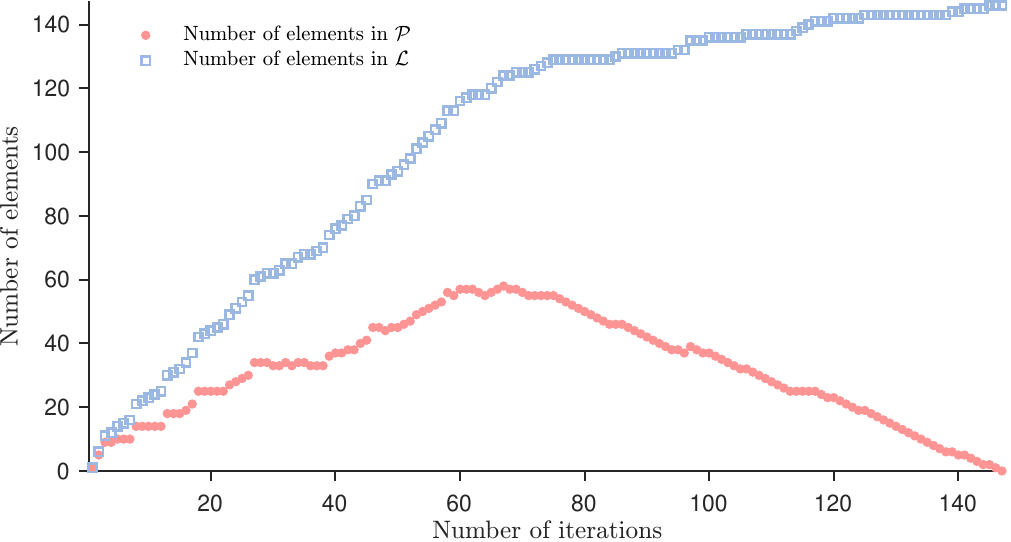}
    \caption{Number of elements in $\mathcal{L}$ and $\mathcal{P}$ at each iteration of \Cref{alg:main}, for $f$ as in \Cref{theo:henon} with $\xi = 1.5$.}
    \label{fig:noElements}
\end{figure}

Once we have constructed the sets of u-tubes $\mathcal{K}$ and $\mathcal{L}$, we can store them, reducing the computational effort to verifying that, for each $\mathcal{U} \in \mathcal{K}$, there is $\mathcal{V} \in \mathcal{L}$ such that $\mathcal{U} \mapsthrough{f} \mathcal{V}$. This approach significantly reduces runtime. In practice we store an ordered list of u-tubes representing $\mathcal{L}$ and an ordered list of mapping data, which we denote $\mathcal{M}$. The verification proceeds as follows:

\begin{alg} \label{alg:verif}
\end{alg}
\begin{enumerate}
    \item Split the big u-tubes in $\mathcal{L}$ to generate an ordered list of small u-tubes representing $\mathcal{K}$ that preserves the order inherited from $\mathcal{L}$.
    \item For each $\mathcal{U} \in \mathcal{K}$, the ordered list $\mathcal{M}$ provides a big u-tube $\mathcal{V} \in \mathcal{L}$. Verify that $\mathcal{U} \mapsthrough{f} \mathcal{V}$.
\end{enumerate}

\section{U-tubes for the H\'enon-like system} \label{sec:specdefs}

Having described the general method to verify the existence of blenders, we now apply it to prove \Cref{theo:henon}. More precisely we consider the H\'enon-like family of maps $f: \mathbb{R}^3 \to \mathbb{R}^3$ given by
\begin{equation*}
    f(x, y, z) = (ax^2 + by + c, x, \xi z + x),
\end{equation*} and show that it presents hyperbolic blenders for $a = 4$, $b = 0.3$, $c = -2.375$ and $\xi \in (1, 1.712]$ by constructing a family $\Gamma$ of curves in the box $\mathcal{D}$ that satisfies the hypothesis of \Cref{theo:main} via \Cref{alg:main}. We use standard methods to show that $\Lambda$ is transitive and hyperbolic with unstable dimension 2 (see \cite{MR2736320, MR4808810}).

In this section, we define the u-tubes needed for \Cref{theo:main}. To this end, we introduce curves that can be represented computationally. A \textit{u-curve} is a curve $\alpha:I \to \mathcal{D}$ such that \[\alpha(t) = (t, p_y(t), p_z(t)),\] where $p_y$ and $p_z$ are polynomials. Note that u-curves are somewhat aligned with the strong unstable direction. By a slight abuse of notation, we will identify a curve $\gamma$ with its image $\gamma(I) \subset \mathcal{D}$. Let $\epsilon_y$, $\hat{\epsilon}_y$, $\epsilon_z$ and $\hat{\epsilon}_z$ be positive real numbers. We define a u-tube around $\alpha$ as \begin{multline*}
    \mathcal{U}(\alpha, \epsilon_y, \hat{\epsilon}_y, \epsilon_z, \hat{\epsilon}_z) := \{ \gamma : I \to \mathcal{D} \text{, } C^1 \text{-curve with } \gamma(t) = (t, \gamma_y(t), \gamma_z(t)) \text{ such that }\\ 
    |\gamma_y - p_y| \leq \epsilon_y, |\gamma_y' - p_y'| \leq \hat{\epsilon}_y, |\gamma_z - p_z| \leq \epsilon_z, |\gamma_z' - p_z'| \leq \hat{\epsilon}_z \}.
    \end{multline*}

Even though we cannot explicitly represent every curve in a u-tube, we will see that we can use interval arithmetic to verify the specific properties we are interested in. In the following lemma we define big and small u-tubes such that a big u-tube splits into small u-tubes. Here, the positive integer $n$ represents the number of small u-tubes whose union forms a big u-tube, and the positive integer $k$ represents the proportion between the sizes of the big and small u-tubes along the $z$-axis. Additionally, $|I|$ denotes the length of the interval $I$. \Cref{fig:split_tube} illustrates such a decomposition for $n = 3$ and $k = 2$, and in \Cref{sec:results} we discuss how different values of $n$ and $k$ affect the outcome.

\begin{figure}[h] 
  \centering
  \def\svgwidth{1\linewidth}
  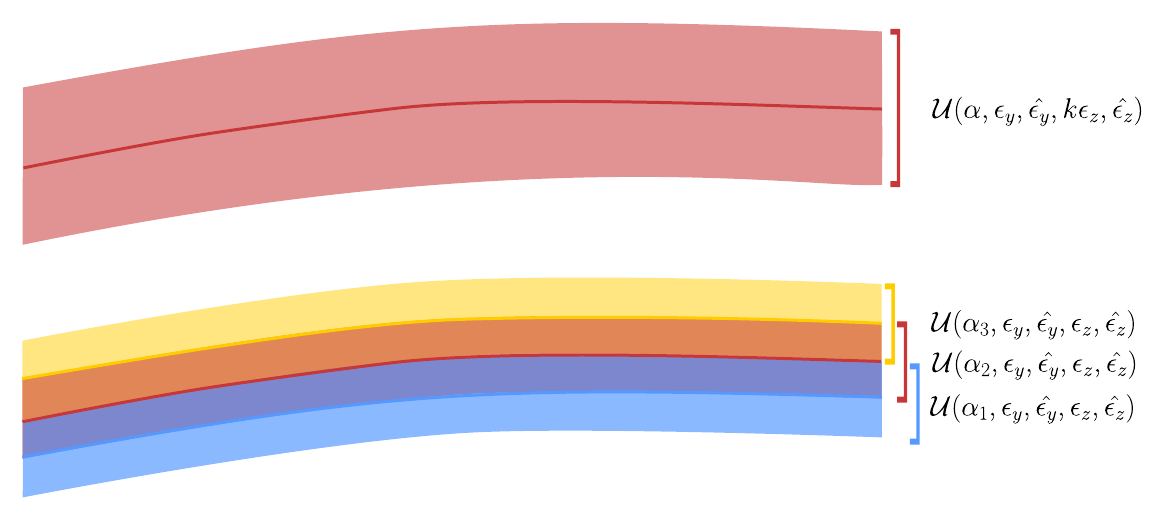
  \caption{The projection to the $xz$-plane of the u-tubes from \Cref{lemma:split} for $n = 3$, $k = 2$. Above is the big u-tube around $\alpha$. Below is the same set represented as the union of the small u-tubes around $\alpha_1$ (in blue), $\alpha_2$ (in red), and $\alpha_3$ (in yellow).} \label{fig:split_tube}
\end{figure}
\newpage
\begin{lemma} \label{lemma:split}
    Let $\alpha:I \to \mathcal{D}$ be a u-curve such that $\alpha(t) = (t, p_y(t), p_z(t))$. Let $n$, $k$ be natural numbers with $n>k>1$. Then, there exist u-curves $\alpha_i:I \to \mathcal{D}$, for $i \in \{1,..., n\}$, and a constant $\delta(k, n) = \frac{2(n-k)}{(n-1)|I|}$ such that
    \[\mathcal{U}(\alpha, \epsilon_y, \hat{\epsilon}_y, k \epsilon_z, \hat{\epsilon}_z) = \bigcup_{i=1}^{n} \mathcal{U}(\alpha_i, \epsilon_y, \hat{\epsilon}_y, \epsilon_z,  \hat{\epsilon}_z),\] 
    for any positive real numbers $\epsilon_y$, $\hat{\epsilon}_y$, $\epsilon_z$, and $\hat{\epsilon}_z = \delta(k, n) \epsilon_z$. We refer to $\mathcal{U}(\alpha, \epsilon_y, \hat{\epsilon}_y, k \epsilon_z, \hat{\epsilon}_z)$ and $\mathcal{U}(\alpha_i, \epsilon_y, \hat{\epsilon}_y, \epsilon_z, \hat{\epsilon}_z)$ as the big and small u-tubes around $\alpha$ and $\alpha_i$ respectively.
\end{lemma}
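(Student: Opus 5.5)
The plan is to take the $\alpha_i$ to be vertical translates of $\alpha$ in the $z$-direction, $\alpha_i(t) = (t, p_y(t), p_z(t) + c_i)$ with constants $c_i$, and to reduce the equality of tubes to a one-dimensional covering statement. The $y$-constraints and the derivative constraints in $z$ are then literally the same for the big tube and every small tube, because $\alpha_i$ has the same $p_y$ and the same $p_z'$ as $\alpha$. So membership of a $C^1$ curve $\gamma(t) = (t,\gamma_y(t),\gamma_z(t))$ in the tubes only differs through the $C^0$ condition on $g := \gamma_z - p_z$:
\begin{itemize}
\item the big tube requires $|g| \le k\epsilon_z$;
\item the $i$-th small tube requires $|g - c_i| \le \epsilon_z$.
\end{itemize}
Both tubes also require $|g'| \le \hat\epsilon_z = \delta\epsilon_z$ and the same $y$-conditions, together with $\gamma(I)\subset\mathcal{D}$, which is part of both definitions.

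I will choose an equally spaced grid $c_i = -(k-1)\epsilon_z + (i-1)s$ for $i=1,\dots,n$, with spacing $s = \frac{2(k-1)\epsilon_z}{n-1}$. Then $c_1 = -(k-1)\epsilon_z$ and $c_n = (k-1)\epsilon_z$.

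The inclusion $\supseteq$ is immediate. If $|g - c_i| \le \epsilon_z$, then $|g| \le |c_i| + \epsilon_z \le (k-1)\epsilon_z + \epsilon_z = k\epsilon_z$.

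For $\subseteq$, let $\gamma$ be in the big tube and set $m = \min_I g$ and $M = \max_I g$.
\begin{itemize}
\item By the mean value theorem, the width satisfies $w := M - m \le |I|\,\delta\epsilon_z = \frac{2(n-k)}{n-1}\epsilon_z$. This is exactly where the value of $\delta(k,n)$ enters.
\item A centre $c$ works precisely when $c \in J := [M-\epsilon_z,\, m+\epsilon_z]$. This interval has length $2\epsilon_z - w \ge 2\epsilon_z\bigl(1 - \frac{n-k}{n-1}\bigr) = \frac{2(k-1)\epsilon_z}{n-1} = s$.
\item Since $|g|\le k\epsilon_z$, we have $M - \epsilon_z \le (k-1)\epsilon_z = c_n$ and $m + \epsilon_z \ge -(k-1)\epsilon_z = c_1$. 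So $J$ meets $[c_1,c_n]$.
\end{itemize}
A closed interval of length at least the grid spacing $s$ that meets $[c_1, c_n]$ must contain a grid point $c_i$. To see this, clip $J$ to $[c_1,c_n]$. If the clipped interval is all of $J$, its length is $\ge s$ and it contains a grid point. Otherwise it contains $c_1$ or $c_n$ itself. Hence $\gamma$ lies in the $i$-th small tube. The hypothesis $n > k > 1$ guarantees $s > 0$ and $\delta > 0$.

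The only step needing care is this final covering argument, in particular checking the endpoint cases so the clipping is right. A secondary point is the requirement that each $\alpha_i$ maps into $\mathcal{D}$, since translation could push $\alpha_i$ out of the box. I would handle this by noting that every curve of both tubes is required to lie in $\mathcal{D}$ anyway, so the argument above does not use the range of $\alpha_i$. If $\alpha_i$ itself must lie in $\mathcal{D}$, I would discard any small tube that is empty, or shrink its centre curve inside $\mathcal{D}$ without affecting the set of admissible $\gamma$. Alternatively I would assume, as holds in practice, that $\alpha$ sits far enough from $\partial\mathcal{D}$ in $z$.
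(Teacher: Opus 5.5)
Your proof is correct and follows the paper's argument. It uses the same $z$-translates $\alpha_i$, uniformly spaced over $[-(k-1)\epsilon_z,(k-1)\epsilon_z]$, and the same mean-value bound $\max g-\min g\le\hat\epsilon_z|I|=\frac{2(n-k)}{n-1}\epsilon_z$. Your covering step (the admissible centres form an interval of length at least the grid spacing) is just the dual of the paper's comparison of the range of $\gamma_z-p_z$ with the overlap of consecutive small tubes. Your indexing $c_i=-(k-1)\epsilon_z+(i-1)s$ also avoids the off-by-one in the paper's formula, and your closed-interval argument handles the non-strict bound $|\gamma_z'-p_z'|\le\hat\epsilon_z$ properly, which the paper's proof treats as strict.
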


\begin{proof}
We define the u-curves $\alpha_i$ as translations of $\alpha$ in $z$ by constants uniformly spaced in $[-\epsilon_z(k-1) , \epsilon_z(k-1) ]$. Explicitly, \[ \alpha_i(t) = \alpha(t) + \left(0, 0, \left(-(k-1) + i\,\frac{2(k-1)}{n-1}\right)\epsilon_z\right), \] for $i \in \{1, \cdots, n \}$. It is clear that the union of the small u-tubes around the u-curves $\alpha_i$ is included in the big u-tube around $\alpha$. For the other inclusion, let $\gamma \in \mathcal{U}(\alpha, \epsilon_y, \hat{\epsilon}_y, k\epsilon_z, \hat{\epsilon}_z)$ such that $\gamma(t) = (t, \gamma_y(t), \gamma_z(t))$. Since $|(\gamma_z - p_z)'| < \hat{\epsilon}_z$ on $I$, we have that for any $t_1, t_2 \in I$ it holds
\[
|(\gamma_z - p_z)(t_2) - (\gamma_z - p_z)(t_1)| < \hat{\epsilon}_z |I|.
\]
 Moreover, consecutive small u-tubes are centred at $\alpha_i$ and $\alpha_{i+1}$, which differ in $z$ by $\frac{2(k-1)}{n-1}\epsilon_z$, and each has width $2\epsilon_z$ in $z$, so their overlap is 
\[
2\epsilon_z - \frac{2(k-1)}{n-1}\epsilon_z = \frac{2(n-k)}{n-1}\epsilon_z = \hat{\epsilon}_z |I|.
\]
Thus the image of $\gamma_z - p_z$ over $I$ is an interval of length less than the overlap in $z$ between consecutive small u-tubes, and therefore $\gamma$ must be contained in at least one of them.
\end{proof}

From now on, we fix $\epsilon_y = \hat{\epsilon}_y$, $\epsilon_z$, $n$, and $k$, and simply denote the small and big u-tubes around $\alpha$ as \[\mathcal{U}_\alpha := \mathcal{U}(\alpha, \epsilon_y, \epsilon_y, \epsilon_z, \hat{\epsilon}_z), \] \[\mathcal{V}_{\alpha} := \mathcal{U}(\alpha, \epsilon_y, \epsilon_y, k\epsilon_z, \hat{\epsilon}_z)\] respectively.

\section{Verification for the H\'enon-like system} \label{sec:verif_spec}

In both \Cref{alg:main} and \Cref{alg:verif} we need to check that a given small u-tube $\mathcal{U}_{\alpha}$ maps through a given big u-tube $\mathcal{V}_{\beta}$. In this section we present a characterisation of this condition that can be verified computationally.

Let us first introduce some notation. Let $\alpha(t) = (t, p_y(t), p_z(t))$ and $\beta(t) = (t, q_y(t), q_z(t))$. Recall that we fixed constants $k$, $\epsilon_y$, $\epsilon_z$ and $\hat{\epsilon}_z$ that define u-tubes. We write $\boldsymbol{\epsilon_y} := [-\epsilon_y, \epsilon_y]$, $\boldsymbol{\epsilon_z} := [-\epsilon_z, \epsilon_z]$, and $\boldsymbol{\hat{\epsilon}_z} := [-\hat{\epsilon}_z, \hat{\epsilon}_z]$. For any point $a \in \mathbb{R}^3$ we define the unstable cone $\mathcal{C}^u \subset T_a\mathbb{R}^3$ as the set of all vectors $v = (v_x, v_y, v_z) \in T_a\mathbb{R}^3$ such that $\frac{v_y}{v_x} \in \boldsymbol{\epsilon_y}$ and $\frac{v_z}{v_x} \in \boldsymbol{\hat{\epsilon}_z}$. The interior of the unstable cone, denoted $\ior(\mathcal{C}^u)$, consists of all vectors $v = (v_x, v_y, v_z) \in T_a\mathbb{R}^3$ such that $\frac{v_y}{v_x} \in \ior( \boldsymbol{\epsilon_y})$ and $\frac{v_z}{v_x} \in \ior( \boldsymbol{\hat{\epsilon}_z})$. For a given u-curve $\alpha$ with $\alpha(t) = (t, p_y(t), p_z(t))$ and a neighbourhood $U(I)$ of $I$ we define a diffeomorphism $\varphi_{\alpha}: U(I) \times \mathbb{R} \times \mathbb{R} \to U(I) \times \mathbb{R} \times \mathbb{R}$ such that \[\varphi_{\alpha}(x, y, z) = (x, y + p_y(x), z + p_z(x)).\] Recall that a u-tube around $\alpha$ is a collection of curves $\gamma: I \to \mathbb{R}^3$ with $\gamma(t) = (t, \gamma_y(t), \gamma_z(t))$, where $\gamma_y$ and $\gamma_z$ are $C^1$-close to $p_y$ and $p_z$ respectively. It follows that $\varphi_{\alpha}$ maps the box $I \times \boldsymbol{\epsilon_y} \times \boldsymbol{\epsilon_z}$ (or $I \times \boldsymbol{\epsilon_y} \times k\boldsymbol{\epsilon_z}$) endowed with the cone field $\mathcal{C}^u$ to the u-tube $\mathcal{U}_{\alpha}$ (or $\mathcal{V}_{\alpha}$). Given two u-curves $\alpha$ and $\beta$, we define \[f_{\alpha , \beta} = \varphi_{\beta}^{-1} \circ f \circ \varphi_{\alpha}.\] As suggested by \Cref{fig:fab}, in order to verify that $\mathcal{U}_{\alpha} \mapsthrough{f} \mathcal{V}_{\beta}$ it is enough to check a similar condition for $f_{\alpha, \beta}$, which we make precise below. 
\begin{deff} \label{deff:through_box}
    Let $\alpha$ and $\beta$ be two u-curves. Let $T \subset I$. We say that a box of the form $B = T \times \boldsymbol{\epsilon_y} \times \boldsymbol{\epsilon_z}$ is a \textit{through box} for $\alpha$ and $\beta$ if the following conditions hold:
    \begin{enumerate}
        \item The two connected components of $f_{\alpha, \beta}(\partial T \times \boldsymbol{\epsilon_y} \times \boldsymbol{\epsilon_z})$ are contained in different connected components of $\mathbb{R}^3 \setminus (I \times \mathbb{R} \times \mathbb{R})$. 
        \item $f_{\alpha, \beta}(p) \in \ior(\mathbb{R} \times \boldsymbol{\epsilon_y} \times k\boldsymbol{\epsilon_z})$ for all $p \in B$ with $f_{\alpha, \beta}(p) \in I \times \mathbb{R} \times \mathbb{R}$.
        \item $Df_{\alpha, \beta}(p)v \in \ior (\mathcal{C}^u)$ for every $p \in B$ with $f_{\alpha, \beta}(p) \in I \times \mathbb{R} \times \mathbb{R}$ and every $v \in \mathcal{C}^u$.
    \end{enumerate}
\end{deff}

\begin{figure}[h]
    \centering
    \includegraphics[width=1\linewidth]{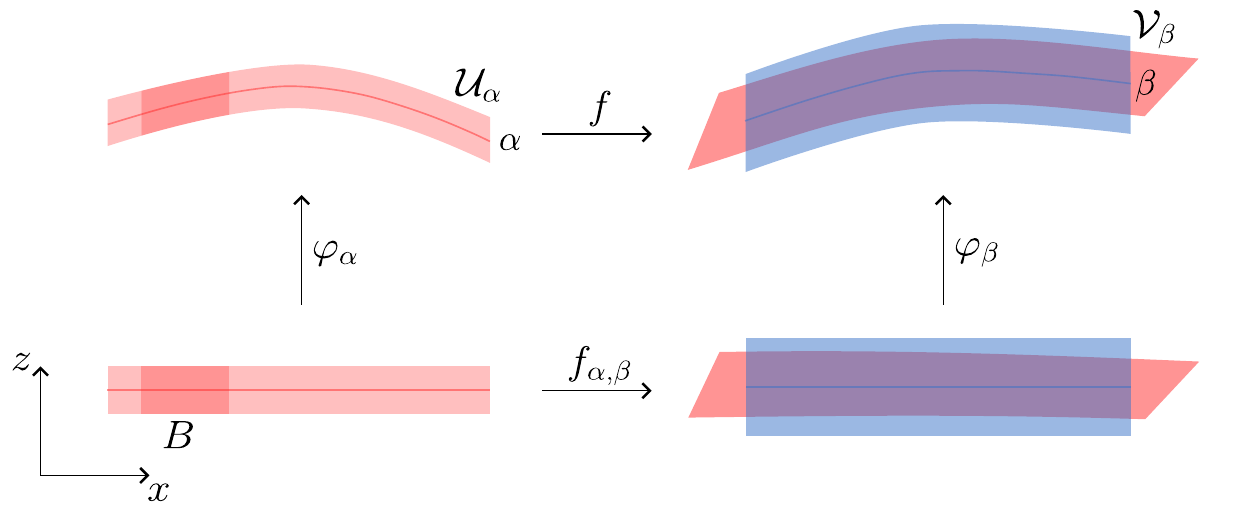}
    \caption{The projection to the $xz$-plane of a through box $B$ for the u-curves $\alpha$ and $\beta$ (\Cref{deff:through_box}).}
    \label{fig:fab}
\end{figure}

\begin{lemma} \label{lemma_verif}
    If $B$ is a through box for $\alpha$ and $\beta$, then $\mathcal{U}_{\alpha}$ maps through $\mathcal{V}_{\beta}$.
\end{lemma}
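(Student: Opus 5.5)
Throughout I work in the straightened coordinates. Since $\varphi_\alpha$ takes $I\times\boldsymbol{\epsilon_y}\times\boldsymbol{\epsilon_z}$ with the cone field $\mathcal{C}^u$ onto $\mathcal{U}_\alpha$, and $\varphi_\beta$ does the same for $I\times\boldsymbol{\epsilon_y}\times k\boldsymbol{\epsilon_z}$ and $\mathcal{V}_\beta$, it suffices to show the following. For every graph $\tilde\gamma(t)=(t,\tilde\gamma_y(t),\tilde\gamma_z(t))$ over $I$ lying in $I\times\boldsymbol{\epsilon_y}\times\boldsymbol{\epsilon_z}$ with tangents in $\mathcal{C}^u$, and for every $g$ $C^1$-close to $f$, there is a subinterval $S\subset T$ such that $g_{\alpha,\beta}(\tilde\gamma(S))$ is a graph over all of $I$, contained in $I\times\boldsymbol{\epsilon_y}\times k\boldsymbol{\epsilon_z}$, with tangents in $\mathcal{C}^u$. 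Here $g_{\alpha,\beta}=\varphi_\beta^{-1}\circ g\circ\varphi_\alpha$. Pulling back by $\varphi_\beta$ then gives $\sigma=\varphi_\alpha(\tilde\gamma(S))\subset\gamma$ with $g(\sigma)\in\mathcal{V}_\beta$. Note that $\tilde\gamma|_T\subset B$ because $T\subset I$.

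The first step is robustness. The three conditions of \Cref{deff:through_box} are open conditions over a compact set: $B$ is compact and the cone $\mathcal{C}^u$ is projectively compact. Moreover, $g_{\alpha,\beta}$ is $C^1$-close to $f_{\alpha,\beta}$ on a neighbourhood of $B$. The delicate point is the restriction ``whenever the image lies in $I\times\mathbb{R}\times\mathbb{R}$''. I would handle it by first strengthening conditions 2 and 3 by compactness to hold for image points in a slightly larger slab $I'\times\mathbb{R}\times\mathbb{R}$, with $I'\supset I$ open. This works because both conditions, together with condition 1, are strict and persist under small enlargement. Then a $C^1$-small perturbation keeps all three conditions on $I$. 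Condition 1 also persists, since the two boundary faces are compact and are mapped strictly outside $I\times\mathbb{R}^2$ into opposite components. This yields the neighbourhood $\mathcal{F}$.

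Next, fix $g\in\mathcal{F}$ and a curve $\tilde\gamma$ as above, and set $h(t)=\pi_x(g_{\alpha,\beta}(\tilde\gamma(t)))$ for $t\in T$. By condition 1, $h$ takes one endpoint of $T$ below $\min I$ and the other above $\max I$. So by continuity, $h^{-1}(I)$ contains a subinterval $S$ with $h(S)=I$; I take a minimal such $S$ between a last exit and a first entry. On $S$ the image lies in the slab, so condition 3 applies. The tangent $\tilde\gamma'(t)=(1,\tilde\gamma_y',\tilde\gamma_z')$ lies in $\mathcal{C}^u$, so $Dg_{\alpha,\beta}\,\tilde\gamma'(t)\in\ior(\mathcal{C}^u)$. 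In particular its $x$-component is nonzero, since elements of the cone have $v_x\neq 0$ (the ratios are defined). Hence $h$ is strictly monotone on $S$, and the image curve is a $C^1$ graph over $I$, $t\mapsto(t,\eta_y(t),\eta_z(t))$. Its slopes are $\eta_y'=v_y/v_x\in\boldsymbol{\epsilon_y}$ and $\eta_z'=v_z/v_x\in\boldsymbol{\hat\epsilon_z}$. Condition 2 gives $|\eta_y|\le\epsilon_y$ and $|\eta_z|\le k\epsilon_z$. Thus the image lies in $\varphi_\beta^{-1}(\mathcal{V}_\beta)$, as required.

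The main obstacle I expect is the robustness step. Conditions 2 and 3 are imposed only on the subset of $B$ whose image falls in the slab, and this subset moves with $g$. The slab-enlargement and compactness argument above is what I would try first, possibly combined with the observation that near $\partial(I\times\mathbb{R}^2)$ the relevant points come from the interior of the connected preimage. If instead one reads \Cref{deff:through_box} as verified by interval arithmetic with strict inclusions on an enclosure, robustness follows immediately.
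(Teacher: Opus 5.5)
Your proof is correct and follows the same route as the paper. You pass to straightened coordinates via $\varphi_\alpha$ and $\varphi_\beta$, use condition 1 and continuity to find $S\subset T$ whose image projects exactly onto $I$, and conclude from conditions 2 and 3 that the image lies in $\mathcal{V}_\beta$, with robustness coming from the strict inclusions. You are in fact more careful than the paper, which leaves implicit both the strict monotonicity of the $x$-coordinate on $S$ (needed for the image to be a graph over $I$) and the compactness/slab-enlargement argument behind its one-line claim that the conclusion persists for all $g$ near $f$.
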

\begin{proof}

Consider a curve $\gamma$ with $\varphi_{\alpha}(\gamma) \in \mathcal{U}_{\alpha}$. By the definition of $\mathcal{U}_{\alpha}$, every curve in $\mathcal{U}_{\alpha}$ must be of this form. Suppose that $B = T \times \boldsymbol{\epsilon_y} \times \boldsymbol{\epsilon_z}$. From the first item in \Cref{deff:through_box}, it follows that the first coordinate of $f_{\alpha, \beta} ( \gamma|_T)$ contains $I$ in its interior. Thus, there exists $S \subset T$ such that the first coordinate of $f_{\alpha, \beta} ( \gamma|_S)$ is $I$. From the second and third items of \Cref{deff:through_box} and the definition of $\mathcal{V}_{\beta}$ and $\varphi_{\beta}$, it follows that $\varphi_{\beta} \circ f_{\alpha, \beta}(\gamma|_S) \in \ior(\mathcal{V}_{\beta})$. Moreover, since this inclusion is into the interior, this argument holds for every $g$ in a neighbourhood of $f$. Therefore, $\mathcal{U}_{\alpha}$ maps through $\mathcal{V}_{\beta}$.

\end{proof}

This result reduces the problem of verifying that a small u-tube maps through a big u-tube to verifying that an associated box is a through box. The latter can be checked computationally by verifying a finite amount of interval inclusions. The framework up to this point can be generalised to other settings. Next, we describe the computational implementation used to verify that a given box is a through box for $f$. This methodology is tailored specifically to the map $f$ given by \eqref{eq:H}.

Throughout, we consider the interval $I= [-1 + b \epsilon_y, 1 - b \epsilon_y]$ and the box $\mathcal{D} = I \times [-1, 1] \times [-\frac{1}{\xi -1}, \frac{1}{\xi -1}]$, where $\epsilon_y$ corresponds to the size of the u-tubes as described previously. Note that if $\epsilon_y$ is chosen sufficiently small, then the maximal invariant set $\Lambda$ of $\mathcal{D}$ coincides with that of $[-1, 1] \times [-1, 1] \times [-\frac{1}{\xi -1}, \frac{1}{\xi -1}]$. 

We begin by considering two u-curves \[\alpha(t) = (t, p_y(t), p_z(t)) \qandq \beta(t) = (t, q_y(t), q_z(t)).\] 
We will check that a box $B = T \times \boldsymbol{\epsilon_y} \times \boldsymbol{\epsilon_z}$ is a through box for $\alpha$ and $\beta$. Although $\alpha$ and $\beta$ are defined on $I$, for computational purposes we consider them to be extended to the larger interval $[-1, 1]$. We can explicitly write $f_{\alpha, \beta}$ as \[
f_{\alpha,\beta}(x,y,z)
=
\bigl(
\hat{x} + b y,\;
x - q_y(\hat{x} + b y),\;
\xi\bigl(z + p_z(x)\bigr) + x - q_z(\hat{x} + b y)
\bigr),
\] where $\hat{x} = ax^2 + b p_y(x) + c.$ The inputs to this step are the interval $T$ and the coefficients of the polynomials $p_y$, $p_z$, $q_y$ and $q_z$.

In many places in the code, we compute an interval enclosure of a single-variable polynomial $p(x)$ evaluated on an interval $J$. To do this we first compose $p$ with an affine map and reduce to the case of evaluating a polynomial $q(x)$ on $[-1, 1]$. We represent $q(x)$ as a Chebyshev series \[q(x) = \sum_{n=0}^N a_n T_n(x),\] where the $T_n$ are Chebyshev polynomials of the first kind. The first four terms $\sum_{n=0}^3 a_n T_n(x)$ give a cubic polynomial, whose image we can bound tightly by solving for its critical points. The fact that $|T_n(x)| \leq 1$ holds for all $n$ and all $|x| \leq 1$ gives the bound $|\sum_{n=4}^{N} a_n T_n(x)| \leq \sum_{n=4}^N |a_n|$ for the remaining terms.

To verify that $B$ is a through box, we first check that $T\subset I$. If this holds, we proceed to verify the three conditions listed in \Cref{deff:through_box}. To check the first condition, we compute interval enclosures of the images under $f_{\alpha, \beta}$ of the two connected components of $\partial T \times \boldsymbol{\epsilon_y} \times \boldsymbol{\epsilon_z}$. We then verify that one of them lies in $(-\infty, -1 + b \epsilon_y) \times \mathbb{R} \times \mathbb{R}$ and the other in $(1 - b \epsilon_y, \infty) \times \mathbb{R} \times \mathbb{R}$. In fact, we only need to compute the first coordinates of the interval enclosures to verify these inclusions.

To check the second condition in \Cref{deff:through_box}, let $(x, y, z) \in B$ with $f_{\alpha, \beta}(x, y, z) \in I \times \mathbb{R} \times \mathbb{R}$. The second coordinate of $f_{\alpha, \beta}(x, y, z)$ can be bounded as
\[
\bigl| x - q_y(\hat{x} + b y) \bigr|
\;\leq\;
\bigl| x - q_y(\hat{x}) \bigr|
\;+\;
\bigl| q_y(\hat{x}) - q_y(\hat{x} + b y) \bigr| .
\]
By the mean value theorem, $\bigl| q_y(\hat{x}) - q_y(\hat{x} + b y) \bigr| \leq \lVert q_y' \rVert \, \lvert b y \rvert$, where $\lVert q_y' \rVert$ denotes the supremum norm of $q_y'$ over the interval with endpoints $\hat{x}$ and $\hat{x} + by$. Since $f_{\alpha, \beta}(x, y, z) \in I \times \mathbb{R} \times \mathbb{R}$ we have $\hat{x} + by \in I \subset [-1, 1]$. Moreover, since $(x, y, z) \in B$, we have $|y| < \epsilon_y$ and $\hat{x} \in [-1, 1]$. It follows that the interval with endpoints $\hat{x}$ and $\hat{x} + by$ is contained in $[-1, 1]$, therefore the norm of $q_y'$ may be taken over $[-1,1]$. Combining the previous estimate with the bound $|y| < \epsilon_y$, we obtain \[
\bigl| x - q_y(\hat{x} + b y) \bigr|
\;<\;
\bigl| x - q_y(\hat{x}) \bigr|
\;+\;
\lVert q_y' \rVert \, b \, \epsilon_y .
\]
We compute an interval enclosure of this bound over $x \in T$ by evaluating single-variable polynomials on intervals as described above, and verify that it is less than $\epsilon_y$. Recall that $\hat{x}$ is actually a polynomial in $x$ and so $x - q_y(\hat{x})$ is also a polynomial in $x$.

Analogously, the third coordinate of $f_{\alpha, \beta}(x, y, z)$ can be bounded by \[
\bigl| \xi\bigl(z + p_z(x)\bigr) + x - q_z(\hat{x} + b y) \bigr|
\;<\;
\bigl| \xi\, p_z(x) + x - q_z(\hat{x}) \bigr|
\;+\;
\xi\, \epsilon_z
\;+\;
\lVert q_z' \rVert \, b \, \epsilon_y .
\]
We compute an interval enclosure of this bound and verify that it is less than $k \epsilon_z$.

To check the third condition in \Cref{deff:through_box}, we compute the Jacobian \[
D f_{\alpha,\beta}
=
\begin{pmatrix}
\hat{x}' & b & 0 \\[1mm]
1 - q_y'(\hat{x} + b y)\,\hat{x}' & -b\, q_y'(\hat{x} + b y) & 0 \\[1mm]
\xi\, p_z'(x) + 1 - q_z'(\hat{x} + b y)\,\hat{x}' & -b\, q_z'(\hat{x} + b y) & \xi
\end{pmatrix}
\] at a point $(x, y, z) \in B$ with $f_{\alpha, \beta}(x, y, z) \in I \times \mathbb{R} \times \mathbb{R}$. Here $\hat{x}' = \frac{\partial}{\partial x} \hat{x} = 2ax + b p_y'(x)$. We denote by $[Df_{\alpha, \beta}]$ an interval enclosure of $D f_{\alpha,\beta}$ over all such points, obtained by applying the methods described above to each matrix entry. Recall that the unstable cone $\mathcal{C}^u$ consists of all real scalings of vectors in $\{1\} \times \boldsymbol{\epsilon_y} \times \boldsymbol{\hat{\epsilon}_z}$. We let $\boldsymbol{v} = (1, \boldsymbol{\epsilon_y}, \boldsymbol{\hat{\epsilon}_z})$ and compute \[L_x \times L_y \times L_z := [Df_{\alpha, \beta}] (\boldsymbol{v}). \] We verify that $\frac{L_y}{L_x} \subset \ior(\boldsymbol{\epsilon_y})$ and $\frac{L_z}{L_x} \subset \ior(\boldsymbol{\hat{\epsilon}_z})$. Note that rescaling the vector $\boldsymbol{v}$ does not affect these inclusions; therefore, $\boldsymbol{v}$ represents the entire unstable cone $\mathcal{C}^u$.

This concludes the description of \Cref{alg:verif} for the H\'enon-like family of maps.

\section{Construction of u-curves for the H\'enon-like system} \label{sec:constru_spec}

We can use the results from the previous sections to prove \Cref{theo:henon} provided that we have the lists $\mathcal{L}$ and $\mathcal{M}$ (see \Cref{alg:verif}). In this section, we detail how these lists are generated for the H\'enon-like family following \Cref{alg:main}.

Recall that a key step in both \Cref{alg:main} and \Cref{alg:verif} is to check that a small u-tube maps through a big u-tube by verifying that a given box $B = T \times \boldsymbol{\epsilon_y} \times \boldsymbol{\epsilon_z}$ is a through box (see \Cref{deff:through_box}). Since $\epsilon_y$ and $\epsilon_z$ are fixed, the box $B$ is fully determined by the interval $T$, which we compute and store in $\mathcal{M}$ as we generate the lists $\mathcal{L}$ and $\mathcal{M}$. We define $T$ as a minimal interval such that the $x$-coordinate of $f(\alpha(T) + (0, \boldsymbol{\epsilon_y}, \boldsymbol{\epsilon_z}))$ contains the interval $[-1,1]$. This guarantees that the first condition in \Cref{deff:through_box} holds. Since the $x$-coordinate of $f$ is monotone, the lower and upper bounds of $T$ can be computed directly by finding roots of polynomials via Newton's method. Moreover $T$ is chosen to be minimal to ensure tight interval enclosures when verifying the remaining conditions in \Cref{deff:through_box}. 

There may be up to two possible choices for a minimal interval $T$, as shown in \Cref{fig:T}. Making the right choice between them is crucial for the proof to work. We choose by finding the two values of $t$ for which the $x$-coordinate of $f(\alpha(t))$ is 0. Among these two candidates, we compare the corresponding $z$-coordinates of $f(\alpha(t))$ and choose the one that is closer to 0. Moreover, as we initialise \Cref{alg:main} (Step 1) we define the first u-curve as $\beta_1(t) = (t, y_0, 0)$, for some constant $y_0$. Together, these two choices ensure that the absolute values of the $z$-coordinates of our u-curves remain bounded; otherwise, they would grow under iteration, causing the algorithm to fail to terminate.

\begin{figure}[h]
  \centering
  \def\svgwidth{1\linewidth}
  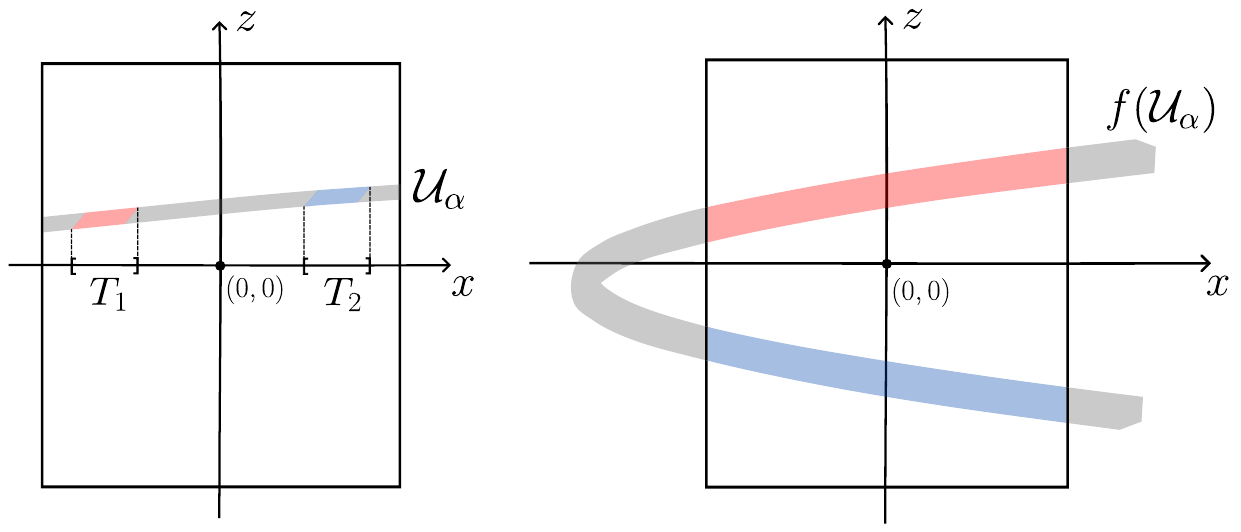
  \caption{The figure shows a small u-tube $\mathcal{U}_\alpha$ (left) and its image $f(\mathcal{U_\alpha})$ (right) both projected to the $xz$-plane. On the right, the red and blue regions represent the intersection of $f(\mathcal{U}_{\alpha})$ with the box $[-1, 1] \times [-1, 1] \times [-\frac{1}{\xi - 1}, \frac{1}{\xi - 1}]$. On the left, we show the corresponding preimage; its projection to the $x$-axis determines the intervals $T_1$ and $T_2$. In this example, we select $T = T_1$, since the intersection point between $f(\alpha|_{T_1})$ and $\{x = 0\}$ is closer to $\{z = 0\}$ than the analogous intersection associated with $T_2$.}
 \label{fig:T}
\end{figure}

Once $T$ is determined, we have all the elements required to verify that a small u-tube maps through a big u-tube. However, for some u-curves $\alpha$ there might not be any existing u-curve $\beta$ for which the small u-tube around $\alpha$ maps through the big u-tube around $\beta$. In that case we construct $\beta$ as an approximation of a subcurve of $f(\alpha)$. We do this by a collocation method. Let $\{x_0, \dots , x_{N} \}$ be a finite set of points in $[-1, 1]$. By Newton's method, we find points $\{t_0, \dots, t_{N}\} \subset T$ such that the $x$-coordinate of $f(\alpha(t_i))$ is equal to $x_i$ for all $i = 0, \dots, N$. We define a u-curve $\beta$ with $\beta(t) = (t, q_y(t), q_z(t))$ by setting $q_y$ and $q_z$ to be the unique polynomials of degree $N$ such that for all $i = 0, \dots, N$, $\beta(x_i) = f \circ \alpha (t_i)$. For numerical stability reasons, we take $\{x_0, \dots , x_{N} \}$ to be Chebyshev nodes, that is $x_i = -\cos(\frac{i\pi}{N})$ for all $i = 0, \dots, N$ \cite{MR1776072}. This collocation procedure is illustrated in \Cref{fig:coll_points}.

\begin{figure}[h!]
  \centering
  \def\svgwidth{1\linewidth}
  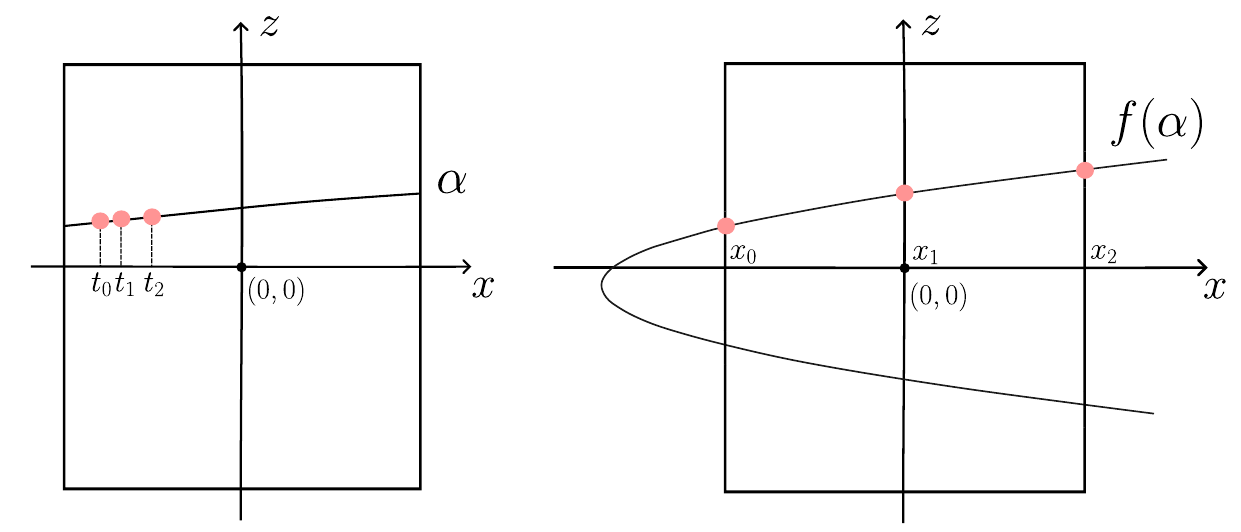
  \caption{ This figure shows a u-curve $\alpha$ (left) and its image $f(\alpha)$ (right) both projected to the $xz$-plane. On the right, in red we represent the intersection points between $f(\alpha|_T)$ and $\{x = x_i\}$, for $i = 0, 1, 2$. On the left, we show the corresponding preimages; their projection to the $x$-axis determine the points $t_0, t_1, t_2 \subset T$.  
 }
 \label{fig:coll_points}
\end{figure}

At this point, all components of \Cref{alg:main} have been specified. However, searching for a u-curve $\beta \in \mathcal{L}$ that a given u-curve $\alpha$ maps through can be costly if done naively. For this reason, we organise the set of u-curves using a hash table. We partition the cross-section $\{x = 0\} \cap \mathcal{D}$ into small rectangular cells and assign each u-curve to the cell where it intersects this plane. The hash table stores, for each cell, the list of u-curves assigned to it. Then, given a u-curve $\alpha$, we first identify which cell is intersected by $f(\alpha(T))$. We then test $\mathcal{U}_\alpha \mapsthrough{H} \mathcal{V}_{\beta}$ only for the u-curves $\beta$ stored in this cell and in neighbouring cells, instead of comparing against the entire collection. This significantly reduces the number of candidates. Additionally, before applying the rigorous verification test, we further discard candidates by checking that the u-curves remain close at $x = x_i$, for $i = 0, \dots, N$.

\section{Results} \label{sec:results}

In this section, we present the results of running \Cref{alg:main} for the H\'enon-like family given by \Cref{eq:H}. The algorithm computes the collection of u-curves needed for the verification, and we report how the choice of computational settings affect the runtime and the number of u-curves generated. 

The outcome of the program depends on several parameters that balance accuracy and computational cost. The parameters $\epsilon_y$ and $\epsilon_z$ determine the size of the u-tubes. Choosing smaller values reduces overestimation, which facilitates the verification of the required conditions and helps prevent u-curves from escaping the box $\mathcal{D}$. On the other hand, excessively small values lead to a larger number of u-tubes, increasing the overall computation time and memory usage. Accuracy is also influenced by the number of collocation points. While a larger number of points improves the quality of the approximation, it also results in higher-degree polynomials, which slow down the computations. Finally, the integers $n$ and $k$ determine the opening of the unstable cones $\mathcal{C}^u$. Wider cones correspond to larger values of $n$ and smaller values of $k$, making the verification conditions easier to satisfy, but at the cost of increased computational cost due to the larger number of u-tubes generated. 

To ensure the result holds for a continuous range of values of $\xi$, rather than running the algorithm at single points, we proceed over intervals as follows. For a given $\xi$-interval, we use its upper bound to determine the domain $\mathcal{D}$. Since the $z$-coordinate of $\mathcal{D}$ increases as $\xi$ decreases, this choice is applicable to every $\xi$ in the interval. To construct the family of u-curves, we fix $\xi$ at the midpoint of the interval and execute \Cref{alg:main} as detailed in \Cref{sec:constru_spec}. Then, we verify the blender condition by executing \Cref{alg:verif} over the entire $\xi$-interval, as detailed in \Cref{sec:verif_spec}. 

For different intervals of $\xi$, we use different settings that balance accuracy and computational cost. In \Cref{table:xi_1}, we summarise the computational settings used for small values of $\xi$, along with the number of u-curves in the family at the time of termination and the total runtime. We note that for $\xi = 1$, even though we construct a family of curves, there is no blender since $\Lambda$ is not a hyperbolic set.

\begin{table}[h]
\centering
\setlength{\tabcolsep}{8pt}
\begin{tabular}{|c|c|c|c|c|c|c|c|}
\hline
$\xi$ & $\epsilon_y$ & $\epsilon_z$ & \makecell{Number of \\ coll. points} & $n$ & $k$ & \makecell{Number \\ of u-curves} & \makecell{Total \\ runtime} \\ 
\hline
$[1, 1.1]$ & $0.02$ & $1.1$ & 3 & 3 & 2 & 8 & 0.009 s\\ 
\hline
$[1.1, 1.2]$ & $0.02$ & $0.8$ & 3 & 3 & 2 & 9 & 0.009 s\\
\hline
$[1.2, 1.3]$ & $0.02$ & $0.5$ & 3 & 3 & 2 & 15 & 0.010 s\\
\hline
\end{tabular}
\caption{Depending on the range of $\xi$, we report the necessary settings for \Cref{alg:main} to terminate. All runtimes were measured on a machine with an Intel Core i5 processor and 16 GB of RAM.}
\label{table:xi_1}
\end{table}
As $\xi$ increases beyond 1.3, verification becomes more challenging, and the settings require more careful tuning, as summarised in \Cref{table:xi_2}. In these cases, the verification is carried out by subdividing the corresponding $\xi$-interval into smaller subintervals, as indicated in the second-last column of \Cref{table:xi_2}. Larger values of $\xi$ require smaller subintervals for the algorithm to succeed.

\begin{table}[h]
\centering
\setlength{\tabcolsep}{3.5pt}
\begin{tabular}{|c|c|c|c|c|c|c|c|c|}
\hline
$\xi$ & $\epsilon_y$ & $\epsilon_z$ & \makecell{Number of \\ coll. points} & $n$ & $k$ & \makecell{Max. number \\ of u-curves} & \makecell{Number of \\ subintervals} & \makecell{Total \\ runtime} \\ 
\hline
$[1.3, 1.4]$ & $0.02$ & $0.2$ & 5 & 3 & 2 & $34$& 10 & $0.231$ s \\
\hline
$[1.4, 1.5]$ & $0.02$ & $0.1$ & 5 & 3 & 2 & $119$& 10 & $0.375$ s \\
\hline
$[1.5, 1.6]$ & $0.02$ & $0.1$ & 5 & 5 & 2 & $263$& 10 & $0.770$ s \\
\hline
$[1.6, 1.65]$ & $0.01$ & $0.05$ & 5 & 5 & 2 & $928$ & 5 & $1.648$ s \\
\hline
$[1.65, 1.7]$ & $0.01$ & $0.01$ & 5 & 5 & 2 & $16799$ & 50 & $6.7$ min \\
\hline
$[1.7, 1.712]$ & $0.004$ & $0.004$ & 5 & 5 & 2 & $140483$& 12 & $31.5$ min \\
\hline
\end{tabular}
\caption{Depending on the range of $\xi$, we report the necessary settings for \Cref{alg:main} to terminate. The values given for the number of u-curves correspond to the maximum observed among all subintervals within the corresponding $\xi$ range. All runtimes were measured on a machine with an Intel Core i5 processor and 16 GB of RAM.}
\label{table:xi_2}
\end{table}
Running the program using the settings specified in \Cref{table:xi_1} and \Cref{table:xi_2} proves \Cref{theo:henon}. For larger values of $\xi$, similar settings still lead to successful runs, but only on very small $\xi$-intervals, as reported in \Cref{table:xi_3}. Covering a larger interval would require multiple runs of the algorithm and the storage of a rapidly growing number of u-curves, which quickly leads to memory limitations. In particular, for $\xi = 1.716$, the program could only be run for a single parameter value.
\begin{table}[h]
\centering
\setlength{\tabcolsep}{5.5pt}
\begin{tabular}{|c|c|c|c|c|c|c|c|}
\hline
$\xi$ & $\epsilon_y$ & $\epsilon_z$ & \makecell{Number of \\ coll. points} & $n$ & $k$ & \makecell{Number \\ of u-curves} & \makecell{Total \\ Runtime} \\
\hline
$[1.71299, 1.71301]$ & $0.001$ & $0.002$ & 7 & 5 & 2 & $118727$ & $2.6$ min\\
\hline
$[1.7139999, 1.7140001]$ & $0.001$ & $0.002$ & 7 & 5 & 2 & $128715$ & $3.0$ min\\
\hline
$[1.7149999, 1.7150001]$ & $0.001$ & $0.0015$ & 7 & 5 & 2 & $205745$ & $5.0$ min\\
\hline
$[1.716, 1.716]$ & $0.001$ & $0.0005$ & 7 & 5 & 2 & $2460876$ & $119.7$ min\\
\hline
\end{tabular}
\caption{ Depending on the range of $\xi$, we report the necessary settings for \Cref{alg:main} to terminate. All runtimes were measured on a machine with an Intel Core i5 processor and 16 GB of RAM.
}
\label{table:xi_3}
\end{table}
\section{Discussion} \label{sec:discussion}

We have presented an algorithm to detect blenders by building a family of curves from a single curve that is roughly aligned with the strong unstable direction. As an application, we prove the existence of blenders for a H\'enon-like family of maps. This approach extends the range of parameters for which this family of maps is known to support a blender. 

A natural question in the study of this family is to determine the threshold of $\xi$ beyond which blenders are lost. The work of \cite{MR4160092} suggests that blenders persist up to some value $\xi_0$ at roughly around $1.75$, as illustrated in Figure 8 of \cite{MR4160092}. Although we have rigorously confirmed that blenders exist for $\xi \in (1, 1.712]$, for higher values of $\xi$ our program can only be run on very small $\xi$-intervals. As a result, even though we verified the existence of blenders for some individual values up to $\xi = 1.716$ (see \Cref{table:xi_3}), memory limitations prevent us from covering the entire range. Nonetheless, we believe that our algorithm would succeed for any individual value of $\xi$ up to $1.716$. Improving the way our program stores u-curve data could potentially allow verification for higher values of $\xi$.

Although a complete characterisation of this threshold lies beyond the scope of this paper, we provide some numerical evidence to shed light on this question. Fixing values of $\epsilon_y$, $\epsilon_z$, $n$ and $k$ that yield a successful result for $\xi = 1.716$, we ran our program for values of $\xi$ between $1.1$ and $1.716$ with uniform spacing $0.01$. In \Cref{fig:number_of_curves} we plot the number of u-curves generated for each value of $\xi$. We observe that this number grows more rapidly as $\xi$ increases, though it remains unclear whether it approaches an asymptote. In \cite{CKO2025}, the authors compute large pieces of a stable manifold and its intersection points with a fixed section, project these intersection points to a line, and study the gaps between them as $\xi$ varies. The absence of gaps suggests the presence of a blender. Although the parameter values for the first two coordinates in \cite{CKO2025} differ from ours, D. C'Julio (personal communication, July 2026) applied this procedure, combined with the method introduced in \cite{CADKO26}, to our choice of values $(a,b,c)$ using a fixed section aligned to the unstable direction and found that the largest gap appears at $\xi^* \approx 1.728546$. This is consistent with our findings: there are no gaps for the values of $\xi$ for which we prove the existence of blenders. As $\xi$ approaches $\xi^*$, the computational time and memory required by our program grow rapidly, making verification intractable before reaching $\xi^*$.

\begin{figure}[h]
    \centering
    \includegraphics[width=1\linewidth]{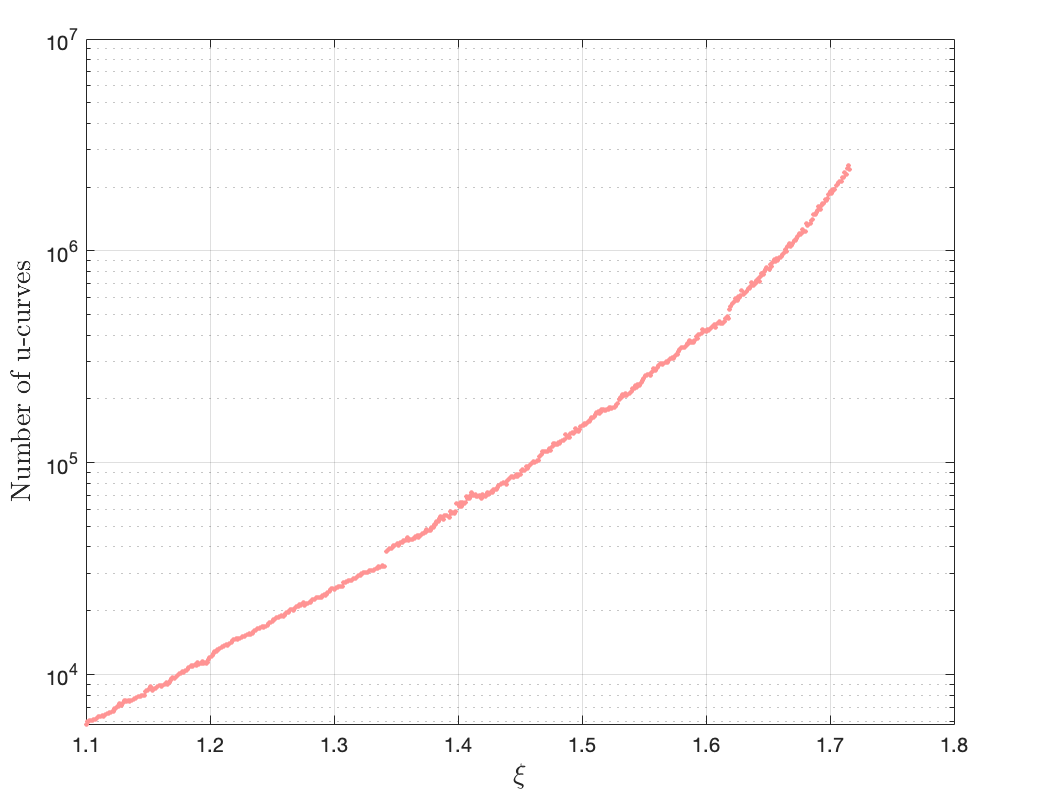}
    \caption{Number of u-curves generated for fixed values of $\epsilon_y$, $\epsilon_z$, $n$ and $k$ as a function of $\xi$, in logarithmic scale, for values of $\xi$ between 1.1 and 1.716 with uniform spacing 0.01. The u-curves are constructed non-rigorously for computational efficiency. Nonetheless, the numbers of u-curves agree with the results of \Cref{sec:results} at selected values of $\xi$, confirming that this plot provides a reliable qualitative picture. }
    \label{fig:number_of_curves}
\end{figure}

All of the above discussion concerns maps with the parameter $b = \beta = 0.3$. We also ran our program for the same family of maps with $b = 0.1$, as studied in \cite{MR3846435}, and observed similar results. We established the existence of blenders for $\xi \in (1, 1.712]$, and successfully verified the result for individual values up to $\xi = 1.719$, slightly beyond what we achieved for $b = 0.3$.

\FloatBarrier

\bibliographystyle{alpha}
\bibliography{sample}

\end{document}